\newtheorem{thm}[equation]{Theorem}
\newtheorem{cor}[equation]{Corollary}
\newtheorem{prop}[equation]{Proposition}
\theoremstyle{remark}
\newtheorem{rem}[equation]{Remark}
\theoremstyle{remark}
\numberwithin{equation}{section} 
\newcommand\grv{{\operatorname{Gr}}(V)}
\newcommand\gr{\operatorname{Gr}}
\newcommand\Gl{\operatorname{Gl}}
\newcommand\gl{\operatorname{gl}}
\newcommand\g{{\mathfrak{g}}}
\newcommand\res{\operatorname{Res}}
\renewcommand\d{\operatorname{d}}
\renewcommand\det{\operatorname{det}}
\newcommand\Det{\operatorname{Det}}
\newcommand\Div{\operatorname{Div}}
\newcommand\tr{\operatorname{Tr}}
\renewcommand\div{\operatorname{div}}
\newcommand{\Lie}{\operatorname{Lie}}
\DeclarePairedDelimiterXPP\globalsymbol[1]{}{\langle}{\rangle}{}{\ifblank{#1}{\cdot,\cdot}{#1}}
\newcommand{\normSymbol}{N_{k(x)/k}}
\DeclarePairedDelimiterXPP\norm[1]{\normSymbol}{\lparen}{\rparen}{}{\ifblank{#1}{\cdot}{#1}}
\DeclarePairedDelimiterXPP\localsymbol[1]{}{\lparen}{\rparen}{}{\ifblank{#1}{\cdot,\cdot}{#1}}
\DeclarePairedDelimiterXPP\globalCommutator[1]{}{\lbrace}{\rbrace}{\raisebox{0pt}[1ex][1ex]{$^{\mathbb{A}_{X}}_{\mathbb{A}^{+}_{X}}$}}{\ifblank{#1}{\cdot,\cdot}{#1}}
\DeclarePairedDelimiterXPP\localCommutator[1]{}{\lbrace}{\rbrace}{^{K_{x}}_{\widehat{\O}_{X,x}}}{\ifblank{#1}{\cdot,\cdot}{#1}}
\renewcommand{\O}{{\mathcal{O}}}
\renewcommand{\k}{{\Bbbk}}
\renewcommand{\c}{{\bf{c}}}
\newcommand{\A}{{\mathbb{A}}}
\newcommand{\D}{{\mathbb{D}}}
\newcommand{\I}{{\mathbb{I}}}
\newcommand{\Z}{{\mathbb{Z}}}
\renewcommand\tilde{\widetilde}
\title[Weil Reciprocity Law and the Theorem of Residues] {Weil Reciprocity Law and the Theorem of Residues}
\author[J.~M.~Mu\~noz Porras]{José M. Mu\~noz Porras}
\author[F.~J.~PLaza~Mart\'in]{Francisco J. Plaza Mart\'in}
\email{fplaza@usal.es}
\thanks{Supported by grant MTM2015-66760-P of MINECO and  SA030G18 of JCyL.}
\address{Departamento de Matem\'aticas and IUFFYM, Universidad de
Salamanca,  Plaza de la Merced 1-4
        \\
        37008 Salamanca. Spain.
        \\
         Tel: +34 923294945. 
}
\subjclass[2010]{14H05 (Primary) 19F15, 11R37 (Secondary)}
\keywords{Reciprocity laws, symbols and arithmetic, class field theory.}
\begin{document}

\begin{abstract}
	This paper shows how the Theorem of Residues (TR) and the Gelfand-Fuchs cocycle can be deduced in a simple way from the Weil Reciprocity Law (WRL). Indeed, if one understand WRL as the triviality of certain extension of groups, then TR is the same statement at the level of Lie algebras. Finally, the Gelfand-Fuchs cocycle can also be obtained in this way. 
\end{abstract}

\maketitle

\section{Introduction}
\label{sec:introduction}

The algebraic theory of solitons (\cite{AMP}), which is a scheme theoretic approach to infinite grassmannians and loop groups (\cite{SegalWilson,Sato}), provides not only a natural framework to prove  the Weil Reciprocity Law (WRL) but also it allows us 
to replace groups by functors on groups. Accordingly,  by considering the associated Lie algebras (\cite{DemazureGabriel, SGA3}), we obtain simple and direct proofs of the Theorem of Residues (TR) and of the Gelfand-Fuchs cocycle. 

%
%

Our strategy for the WRL goes as follows. We fix a proper, irreducible, non singular algebraic curve $X$ over a base field $\k$ with function field $\Sigma_X$. Using infinite grassmannians, we construct a central extension of $\Sigma_X^*$ and we show that it is trivial (Theorem~\ref{t:WRL}). Hence, the commutator associated to it is trivial as well. However, this commutator can be explicitly written down in terms of the commutators of the central extensions of the fields $K_x:=(\widehat{\O}_{X,x})_{(0)}$ (Theorem~\ref{t:wx=1}). Putting everything together, the classical statement of the WRL follows (see~\eqref{e:CC}). It is worth pointing out that all these constructions are natural and that they do not requiere further structures; indeed, even the sign in WRL do show up canonically. 

Then, we focus on the TR, see  \S\ref{ss:TR}. Since  the formalism of \S\ref{sec:prelim} is valid for functors on groups and \S\ref{sec:expressions} contains a lot of explicit explicit expressions, we may repeat the same approach for the Lie algebra of $\Sigma_X^*$, $\Sigma_X=\Lie\Sigma_X^*$. As an instance of how the multiplicative setup (i.e. WRL for $\Sigma_X^*$) determines the additive setup (i.e. TR for $\Sigma_X$) see Corollary~\ref{c:cLie=tr}. 

Let us make some comments on future applications. First, our results are valid over any perfect field $\k$ and we hope to apply them in arithmetic. Second,  Corollary~\ref{c:Tate} shows how the definition of residues given by Tate \cite{Ta} can be deduced from WRL; indeed, we have been always very influenced by Tate's paper. Third, Theorem~\ref{t:algebraicity} shows that if an adele $\alpha\in\A_X$ verifies that $\sum_x \res_x \alpha\d g=0$ for all $g\in\Sigma_X$, then it is a rational function (up to an element of the radical of the residue pairing); thus, it is natural to wonder wether WRL can be used to characterize rational functions. Finally, \S\ref{ss:remarks} exhibits how the Gelfand-Fuchs cocycle shows up in this context and discuss briefly how groups, Lie algebras and their extensions appear in the geometric Langlands Program (\cite{Frenkel}). Although not very surprising, since geometric Langlands Program is an analog of class field theory (and, thus, WRL must be essential \cite{Serre,Weil}, it is a very exciting and promising connection that deserves further research. 

Few days before this paper was finished, José María, the first named  author, passed away. He was my advisor, collaborator and friend. As a brilliant mathematician and a very warm person, I will always remember these years together.  Let this paper be a small tribute to his memory.

\section{Preliminaries}\label{sec:prelim}

\subsection{Central Extensions}\label{subsec:extensions}

Let $\k$ be a d field. Let $(V,V^+)$ be a pair consisting of a $\k$-vector space and a subspace $V^+$. Recall that we say that two subspaces $A,B\subset V$ are commensurable, and we write $A\sim B$, when $(A+B)/(A\cap B)$ is a finite dimensional $\k$-vector space. If we regard the set of all subspaces $A\subset V$ commensurable with $V^+$, $A\sim V^+$, as a basis of neighborhoods of $0$, then $V$ carries a linear topology. We will assume that $V$ is separated, complete and that the topology induced in $V^+$ is discrete. Under these hypothesis  the Sato Universal Grassmannian Manifold (infinite Grassmaniann for short) does exist (see \cite{AMP,Plaza-Arit,Sato,SegalWilson}).

The following two instances will be fundamental along the paper:
\begin{itemize}
	\item \textsl{Local case} (ring of power series): $V=\k'(\!(z)\!), V^+=\k'[\![z]\!]$ with $\k\hookrightarrow \k'$ a finite extension. The topology is the $z$-adic topology. 
	\item \textsl{Global case} (adele ring): $V=\A_X$, $V^+=\A_X^+$ is the adele ring of a complete, irreducible and non singular algebraic curve over $\k$. 
\end{itemize}

For our purposes, it is convenient to recall from \cite{AMP,Plaza-Arit} that the infinite Grassmannian is a $\k$-scheme whose set of $\k$-rational points is
	\[
	\grv\,=\, \left\{ 
	\begin{gathered}
	U\subset V \text{ such that } U\to V/V^+
	\\
	 \text{has finite dimensional ker and coker}
	\end{gathered}\right\}\, .
	\]
Its connected components are labelled by the integers, $\grv =\coprod_{n\in \Z}\gr^n(V)$, and we say $U\in \gr^n(V)$ iff $n= \dim_{\k} (V^+ \cap U) - \dim_{\k} (V/V^+ + U)^*$.
The determinant line bundle $\D$ is defined on $\grv$ by
	\[ 
	\D\,:=\, \operatorname{Det}({\mathcal U}\to V/V^+)
	\] 
where ${\mathcal U}$ is the universal subspace and its fibre at a point $U$ is given by
	\[
	\wedge (V^+ \cap U) \otimes \wedge (V/V^+ + U)^*
	\] 
where $\wedge W$ denotes the exterior algebra of maximal degree of a finite dimensional $\k$-vector space $W$. This bundle will allow us to construct central extensions of groups as follows. Define the group
	\begin{equation}\label{eq:glv}
	\Gl(V) \,:=\, 
	 \left\{ 
	\begin{gathered}
	\text{$\k$-linear maps }  S:V\overset{\sim}{\to} V \text{ such that} 
	\\
	S(V^+)\sim V^+ \text{ and } S^{-1}(V^+)\sim V^+
	\end{gathered}\right\}
	\end{equation}
and note that it acts on $\grv$. Indeed, $S\in \Gl(V)$ acts on the grassmannian as $S^{-1}:\grv\to \grv$, which sends $U$ to $S^{-1}(U)$. 

Let us pursue how this action lifts to the determinant bundle. For any $A\sim V^+$, define $\D_A:= \operatorname{Det}({\mathcal U}\to V/A)$ and note that, by the theory of determinants of perfect complexes,  there is a canonical isomorphism
\[
\D_A \, \overset{\sim}{\to} \D \otimes \wedge ( V^+/(A\cap V^+)) \otimes \wedge( A/(A\cap V^+))^*
\]
In particular, for $S\in \Gl(V)$, one has 
	\[ 
	{(S^{-1})}^*\D\simeq \operatorname{Det}(S^{-1}{\mathcal{U}} \to V/V^+)
	\simeq \operatorname{Det}({\mathcal{U}} \to V/S(V^+)) = \D_{S(V^+)}
	\] 
and, therefore, there is a canonical isomorphism
	\begin{equation}\label{e:S*DD}
	{(S^{-1})}^*\D \, \overset{\sim}{\to} \D \otimes \wedge ( V^+/(S(V^+)\cap V^+)) \otimes \wedge( S(V^+) /(S(V^+)\cap V^+))^*
	\end{equation}
It follows that ${(S^{-1})}^*\D$ and $\D$ are isomorphic, although not canonically. 

Let us define  the \textit{winding number of $S$} to be
	\begin{equation}\label{eq:wind}
	w(S)\,:=\, \dim_{\k} (V^+/(V^+\cap S(V^+))) - \dim_{\k} (S(V^+)/(V^+\cap S(V^+)))\,. 
	\end{equation}
 and observe that $S^{-1}$ maps $\gr^n(V)$ to $\gr^{n+w(S)}(V)$ for all $n\in {\mathbb Z}$. Let $\Gl^0(V)$ be the normal subgroup of those maps with winding number $0$.

The fact that ${(S^{-1})}^*\D\simeq \D$ for $S\in\Gl(V)$  and that $H^0(\gr^0(V),\O^*)=\mathbb{G}_m$, where $\mathbb{G}_m$ denotes the multiplicative group, shows that the group
	\begin{equation}\label{eq:TildeGl0}
	\tilde{\Gl^0(V)}\,:=\,
	\left\{
		\text{pairs } (\sigma, S) \text{ where } S\in \Gl^0(V) \text{ and }
		\sigma: {(S^{-1})}^*\D \overset{\sim}{\longrightarrow}\D 
	\right\}
	\end{equation}
fits into the following exact sequence
	\begin{equation}\label{eq:centralext-global}
	0 \to \mathbb{G}_m  \to \tilde{\Gl^0(V)} \to \Gl^0(V) \to 0
	\end{equation}
where the pair $(\sigma, S)$ is mapped to $S$. Note that
	\[
\tilde{\Gl^0(V)}\,\simeq\,
\left\{
\text{diagrams } \raisebox{20pt}{\xymatrix@R=17pt{\D \ar[d] \ar[r]^{\sim} & \D \ar[d] \\ \gr^0(V) \ar[r]^{S^{-1}}_{\sim} & \gr^0(V)} }
\text{ s.t. } S\in \Gl^0(V)
\right\}
\]
and the composition law can be written down as
\begin{equation}\label{eq:GltildeLaw}
(\sigma,S)\cdot (\tau,T)\,:=\, (\sigma \circ {(S^{-1})}^*(\tau) , ST)
\end{equation}
since ${(S^{-1})}^*(\tau):{(S^{-1})}^*{(T^{-1})}^* \D \to {(S^{-1})}^*\D$ and $\sigma:{(S^{-1})}^* \D\to \D$. 

\begin{rem}\label{r:norm-det}
It is worth pointing out how the construction of \eqref{eq:centralext-global} behaves with respect to the base field. Let $\k'$ be a artinian $\k$-algebra. Let $(V,V^+)$ be a pair consisting of a $\k'$-module $V$ and a $\k'$-submodule of it $V^+$. On the one hand, regarding $\k'$ as base ring, the above procedure yields a central extension of $\Gl^0_{\k'}(V)$ by $\mathbb{G}_m (\k')=(\k')^*$ (see \cite{Plaza-Arit} for details). On the other hand, we may apply the previous construction to the pair of $\k$-vector spaces $(V,V')$ and get a central extension of $\Gl^0_{\k}(V)$ by $\mathbb{G}_m (\k)=\k^*$. It is not difficult to check that the pullback of the first extension by $\Gl^0_{\k'}(V) \hookrightarrow \Gl^0_{\k}(V)$ coincides with the pushout of the second extension by the norm $\operatorname{Norm}_{\k'/\k}:(\k')^*\to \k^*$. The key idea for the proof is the following result (\cite[III.9.4, Prop. 6]{Bourbaki}): let $T$ be an endomorphism of a trivial $\k'$-module of finite rank $V$ with determinant $\det_{\k'}T$; let $\det_{\k}T$ be the determinant of $T$ as endomorphism of $V$ as $\k$-vector space, then it holds that  $\operatorname{Norm}_{\k'/\k}(\det_{\k'}(T))=\det_{\k}(T)$. 
\end{rem}

In order to get a central extension of the whole group $\Gl(V)$ one has to deal with the following fact; the decomposition of the grassmannian into connected components implies that $\operatorname{Aut}_{\gr(V)}\D = H^0(\gr(V),\O^*)=\prod_{\mathbb{Z}} {\mathbb G}_m$. Thus, one may proceed as follows. Fix a pair $(\zeta, z)$ where $z\in\Gl^1(V)$ and $\zeta:{(z^{-1})}^*\D\overset{\sim}\to \D$ and consider 
	\begin{equation}\label{eq:TildeGl}
	\tilde{\Gl(V)}\,:=\,
	\left\{
		\begin{gathered}
		\text{pairs } (\sigma, S) \text{ where } S\in \Gl(V) \text{ and }
		\sigma: {(S^{-1})}^*\D \overset{\sim}{\longrightarrow}\D 
		\\
		\text{such that } (\sigma,S)\cdot (\zeta, z) \cdot (\sigma,S)^{-1} \cdot (\zeta, z)^{-1} \in {\mathbb{G}}_m
		\end{gathered}
	\right\}
	\end{equation}

%

\subsection{Group Law}\label{subsec:group-law}

Let us reinterpret the group law \eqref{eq:GltildeLaw}. 


For any pair $A\sim V^+$ and $M\sim V^+$ such that $M\subseteq A\cap V^+$, there are natural identifications  
	\begin{equation}\label{eq:wedge-VAM}
	\wedge ( V^+/(A\cap V^+)) \simeq  \wedge ( V^+/M)  \otimes \wedge ( (A\cap V^+)/M)^*
	\end{equation}
and 
	\begin{equation}\label{eq:wedge-AVM}
	\wedge ( A/(A\cap V^+))^* \simeq  \wedge ( (A\cap V^+)/M) \otimes \wedge ( A/M)^*  
	\end{equation}
so the pairing of  $\wedge ( (A\cap V^+)/M)$ and its dual induces a map
	\begin{equation}\label{eq:wedge-VAAV}
	\begin{aligned}
	\wedge ( V^+/(A\cap V^+)) \otimes \wedge ( A/(A\cap V^+))^* &\,\overset{\sim}\longrightarrow\, 
	\wedge ( V^+/M)  \otimes \wedge ( A/M)^* 
	\\
	v\otimes \alpha \otimes w\otimes\beta \quad & \,\longmapsto \, \alpha(w) v\otimes \beta
	\end{aligned}
	\end{equation}
where $v\otimes \alpha$ lies in \eqref{eq:wedge-VAM} and $w\otimes\beta$ in \eqref{eq:wedge-AVM}. Note that, if the elements have to be reordered before applying \eqref{eq:wedge-VAAV}, then the sign rule of the exterior algebra is in order. 

Accordingly, for any $M$ as above, there is also a canonical isomorphism 
\begin{equation}\label{eq:A-M}
\D_A \, \overset{\sim}{\to} \D \otimes \wedge ( V^+/M) \otimes \wedge( A/M)^*
\end{equation}

Let $\tilde{S}=(\sigma,S)\in \widetilde{G}$ be given. Having in mind \eqref{e:S*DD}, it follows that 
$\sigma: {(S^{-1})}^*\D \to \D$ can be interpreted as an element  
\begin{equation}\label{eq:wedge-sigma}
\sigma^- \otimes \sigma^+\in \wedge^{s^-} ( V^+/(S(V^+)\cap V^+))^* \otimes \wedge^{s^+}( S(V^+) /(S(V^+)\cap V^+))
.	\end{equation}

We compute the product of $\tilde{S}=(\sigma,S)$ and $\tilde{T}=(\tau,T)$ using the definition of the group law of $ \widetilde{G}$
\[
\tilde{S}\tilde{T} \, =\, (\sigma,S)(\tau,T) \,=\, 
( \sigma \circ {(S^{-1})}^*(\tau)   , ST)
\, .
\]
That is, $ \sigma \circ {(S^{-1})}^*(\tau)$ is the composite 
\[
{(S^{-1})}^* {(T^{-1})}^* \D \overset{{(S^{-1})}^*(t)}\longrightarrow    {(S^{-1})}^*\D \overset{\sigma}\longrightarrow  \D  
\, . 
\]

Arguing as above, ${(S^{-1})}^*(\tau)= {(S^{-1})}^*(\tau^-)\otimes {(S^{-1})}^*(\tau^+)  $ is thought as an element  of
	\[ 
 \wedge^{t^-} ( S(V^+)/(ST(V^+)\cap S(V^+)))^* \otimes \wedge^{t^+}( ST(V^+) /(ST(V^+)\cap S(V^+)))	
	\] 

Hence, $\sigma \circ {(S^{-1})}^*(\tau) $ is associated to the element of 
	\[
	\wedge ( V^+/(ST(V^+)\cap V^+))^* \otimes \wedge( ST(V^+) /(ST(V^+)\cap V^+))	
	\]
that is obtained from the tensor product of those elements associated to $\sigma$ and ${(S^{-1})}^*(\tau)$. In order to make such product rigorous, one has to consider $M\sim V^+$ such that $M\subseteq V^+ \cap S(V^+) \cap ST(V^+)$ and take into account the identification \eqref{eq:A-M}. Pursing these isomorphisms, one obtains that the element associated to  $\sigma \circ {(S^{-1})}^*(\tau)  $, which will be denoted by $\sigma \cdot  {(S^{-1})}^*(\tau)  $   is explicitly given by
	\begin{equation}\label{eq:product-sign}
	\sigma \cdot  {(S^{-1})}^*(\tau)
	\,:=\, 
	(-1)^{t^- s^+}\sigma^- \wedge {(S^{-1})}^*(\tau^-) \otimes \sigma^+ \wedge {(S^{-1})}^*(\tau^+)
	\, . \end{equation}

It is a straightforward calculation that this expression is well defined and that it does not depend on the choice of $M$. Indeed, let us choose $M$, $\mu^- \in \wedge^m (A\cap V^+)/M$  and $\mu^+ \in \wedge^m ((A\cap V^+)/M)^*$  such that $\mu^+(\mu^-)=1$. Then, we may represent $\sigma^- \otimes \sigma^+$ by $ \sigma^-  \wedge \mu^-\otimes \mu^+ \wedge \sigma^+$ and
	\[
	\begin{gathered}
	(\sigma^-  \wedge \mu^-\otimes \mu^+ \wedge \sigma^+) \,\cdot\, {(S^{-1})}^*(\tau^-)\otimes {(S^{-1})}^*(\tau^+) \,= \\
	=\, (-1)^{t^- (s^++m)} (\sigma^-  \wedge \mu^-   \wedge {(S^{-1})}^*(\tau^-) \otimes \mu^+ \wedge \sigma^+\wedge {(S^{-1})}^*(\tau^+) \,= \\
	=\, (-1)^{t^- (s^++m)} (-1)^{ m t^-} (\sigma^-   \wedge {(S^{-1})}^*(\tau^-)\wedge \mu^-   \otimes \mu^+ \wedge \sigma^+\wedge {(S^{-1})}^*(\tau^+) 
	\end{gathered}
	\]
which coincides with \eqref{eq:product-sign} since $\mu^+(\mu^-)=1$. Similarly, we can replace ${(S^{-1})}^*(\tau^-)\otimes {(S^{-1})}^*(\tau^+)$ by ${(S^{-1})}^*(\tau^-) \wedge \mu^-\otimes \mu^+ \wedge {(S^{-1})}^*(\tau^+) $ and check that the product is well defined.

\begin{rem}
It should be noted that the above discussion is an explicit version of the signs of the diagrams of \cite[\S4]{ACK}	and \cite[\S3.2]{AP}. On the other hand, this product resembles the tensor product of graded algebras. 
\end{rem} 
\subsection{Cocycles and Commutators}

From now on, we will fix a commutative subgroup $G$ of either $\Gl^0(V)$ or $\Gl(V)$. In both cases, using \eqref{eq:TildeGl0} or, respectively, \eqref{eq:TildeGl}, one obtains a central extension
	\begin{equation}\label{eq:centralext} 
	0 \to \mathbb{G}_m   \to \tilde{G} \to G \to 0
	\end{equation}

This central extension determines and it is determined by the class of a $2$-cocycle in $H^2(G, \mathbb{G}_m)$; that is, a map
	\begin{equation}\label{eq:cociclo}
	\c: G \times G \longrightarrow \mathbb{G}_m =\k^*
	\, .
	\end{equation}
Indeed, the cocycle provides an alternative description of the central extension and its group law. As a set, consider the product $\mathbb{G}_m\times G $ and endow it with the following composition law
	\[
	(s,S)\cdot (t,T)\,:=\, ( s t \cdot  \c(S,T), ST)
	\]
where we use the notation $(s,S)$ for an element of the semidirect product with this composition law and $(\sigma,S)$ for an element of $\tilde{G}$ with \eqref{eq:GltildeLaw} as composition law.

In particular, to each central extension of $G$ we associate its commutator
	\[
	\begin{aligned}
	G \times G & \longrightarrow {\mathbb G}_m \\
	(S_1, S_2) & \longmapsto  \langle S_1 , S_2 \rangle\,:=\, \tilde{S_1}\tilde{S_2}\tilde{S_1}^{-1}\tilde{S_2}^{-1} 
	\end{aligned}
	\]
where $\tilde{S_i}$ denotes a preimage of $S_i$ by the projection map $\tilde{G} \to G$. Note that the commutator, is skew-symmetric and multiplicative on both arguments (\cite[Lemma~3.1.2]{AP}). It then follows that the commutator is a skew-symmetric $2$-cocycle (\cite{Hosszu}).

Recall the well known relation between the commutator and the $2$-cocycle $\c$. 

\begin{prop}
Let $G$ be commutative and $S, T\in G$. Let $\c$ be any $2$-cocycle in the cohomology class associated to the central extension \eqref{eq:centralext}.  It holds that 	
	\begin{equation}\label{eq:Commutator}
	\langle S,T\rangle  \, = \, \frac{\c(S,T)}{\c(T,S)} 
	\end{equation}
\end{prop}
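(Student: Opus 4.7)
The plan is to compute the commutator directly from the cocycle description of $\tilde{G}$ given just before the proposition, using the fact that the commutator is independent of the choice of lifts (this is standard for central extensions: two lifts of $S$ differ by a scalar in $\mathbb{G}_m$, which is central, so it cancels in $\tilde{S}\tilde{T}\tilde{S}^{-1}\tilde{T}^{-1}$).

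First, I would pick any preimages, most conveniently $\tilde{S}=(1,S)$ and $\tilde{T}=(1,T)$, realized in the semidirect-product model $\mathbb{G}_m\times G$ whose composition law is
\[
(s,S)\cdot(t,T)\,=\,(s\,t\,\c(S,T),\,ST).
\]
Then a direct application of this formula gives $\tilde{S}\tilde{T}=(\c(S,T),ST)$ and $\tilde{T}\tilde{S}=(\c(T,S),TS)$. Using the hypothesis that $G$ is commutative, $ST=TS$, so both products sit over the same element of $G$ and differ only by the scalar
\[
\tilde{S}\tilde{T}\,=\,\frac{\c(S,T)}{\c(T,S)}\cdot\tilde{T}\tilde{S}.
\]

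Next, since $\mathbb{G}_m$ is central in $\tilde{G}$, this scalar commutes with $\tilde{S}^{-1}\tilde{T}^{-1}$, and therefore
\[
\langle S,T\rangle\,=\,\tilde{S}\tilde{T}\tilde{S}^{-1}\tilde{T}^{-1}\,=\,\frac{\c(S,T)}{\c(T,S)}\cdot\tilde{T}\tilde{S}\tilde{S}^{-1}\tilde{T}^{-1}\,=\,\frac{\c(S,T)}{\c(T,S)},
\]
which is the claimed formula.

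There is really no obstacle here; the only point worth checking is that the right-hand side is well defined on the cohomology class of $\c$, i.e.\ invariant under replacing $\c$ by $\c\cdot\d\varphi$ for a $1$-cochain $\varphi:G\to\mathbb{G}_m$. But $(\d\varphi)(S,T)=\varphi(S)\varphi(T)\varphi(ST)^{-1}$, which is symmetric in $S,T$ when $G$ is commutative, so the ratio $\c(S,T)/\c(T,S)$ is unchanged. This matches the fact that the commutator itself depends only on the extension, not on the chosen cocycle representative.
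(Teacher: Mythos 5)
Your proof is correct and follows exactly the route the paper intends: the paper's own proof is just the one-line remark that the identity is ``straightforward from the properties of any $2$-cocycle,'' and your computation in the model $\mathbb{G}_m\times G$ with the twisted multiplication is precisely that straightforward verification, including the two points worth making explicit (independence of the choice of lifts, and invariance of the ratio $\c(S,T)/\c(T,S)$ under coboundaries when $G$ is commutative).
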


\begin{proof}
	It is straightforward from the properties of any $2$-cocycle. 
%
\end{proof}

\subsection{Extensions of Lie Algebras}\label{ss:Lie}

Bearing in mind the results of \cite{Plaza-Arit, Plaza-grass}, the preceding discussion can be generalized for the grassmannian and linear group of $(V\hat{\otimes}_{\k} R, V^+\hat{\otimes}_{\k} R)$ where $R$ is a local $\k$-algebra and  $\hat{\otimes}_{\k}$ denotes the completion w.r.t. the topology of $V$ given by $V^+$.  It follows that exact sequences \eqref{eq:centralext-global}, \eqref{eq:TildeGl} and  \eqref{eq:cociclo} can be still considered when objects are regarded as functors on groups defined on the category of local $\k$-algebras. 
	

It thus makes sense to consider the case $R=\k[\epsilon]/(\epsilon^2)$. Following \cite[Chap. II,\S4]{DemazureGabriel}, \cite[T I, Exp. 2]{SGA3} we may study the Lie algebras attached to a functor on groups $F$; indeed, it is defined by 
	\[
	\Lie F:=F(\k[\epsilon]/(\epsilon^2)\times_{F(\k)} \{1\}
	\] 
endowed with a natural Lie algebra structure
	\[
	\begin{aligned}
	\Lie F \times \Lie F & \longrightarrow \Lie F 
	\\
	(S, T) \, &\longmapsto [S,T]
	\end{aligned}
	\] 
arising from the adjoint action. More precisely, the Lie bracket of $S,T$ can be defined to be the element  $[S,T]\in \Lie F $ fulfilling the following defining relation 
	\[ 
	1 + \epsilon_1 \epsilon_2 [S,T] \,  =\, 
	(1+\epsilon_1 S)(1+ \epsilon_2 T) (1 + \epsilon_1 S)^{-1} (1+\epsilon_2 T)^{-1}
	\]	
where $\epsilon_1$ (resp. $\epsilon_2$) denotes $\epsilon\otimes 1$ (resp. $1\otimes\epsilon$) in $\k[\epsilon]/(\epsilon^2) \otimes_{\k} \k[\epsilon]/(\epsilon^2)$. This relation relies only on the group law of $F$. Note that, if one could expand (at least formally) the right hand side of the previous relation, one would obtain $1 + \epsilon_1\epsilon_2 (S T  - TS)$.


In our situation, recall that 
	\[
\Lie \mathbb{G}_m  \,:=\, \mathbb{G}_m(\k[\epsilon]/(\epsilon^2)) \times_{\mathbb{G}_m(\k)}\{\operatorname{Id}\} \,\simeq \, \k	
\] 
since one identifies $1+\epsilon a\in \Lie \mathbb{G}_m$ with $a\in \k$. Set $\g := \Lie G $. It is easy to conclude that \eqref{eq:centralext} yields an extension of Lie algebras
	\begin{equation}\label{eq:Ext-Lie}
		0 \to \k \to \tilde{\g} \to \g \to 0
	\end{equation}
whose Lie algebra structure is determined by the $2$-cocycle of Lie algebras as follows (\cite[Chap XIV]{CartanEilenberg}). Bearing in mind that the extension is central, the Lie bracket on the $\k$-vector space $\g\oplus \k$ is
	\[
	[ (s,S), (t,T) ] \,:=\, ( \c_{Lie}(S,T), [S,T] )
	\]
and, recalling  \eqref{eq:Commutator},  $\c$, $\c_{Lie}$ and the commutator $\langle\, , \,\rangle$ are related as follows
	\begin{equation}\label{eq:Lie-cocycle}
	1+ \epsilon_1 \epsilon_2 \c_{Lie}(S,T) 
		\,=\, \frac{\c(1+\epsilon_1 S , 1+ \epsilon_2 T)}{\c(1+\epsilon_2 T , 1 + \epsilon_1 S)}
		\,=\,   \langle  1+\epsilon_1 S, 1+ \epsilon_2 T  \rangle
	\end{equation}


\section{Explicit Expressions}\label{sec:expressions}

The $2$-cocycle $\c$ can be explicitly computed on a subset of $G$. Fix a decomposition $V=V^-\oplus V^+$ and observe that, in particular, $V^-\in \gr^0(V)$. Accordingly, any element of $\Gl(V)$ admits a block decomposition;  write  $S_i=\begin{pmatrix}	\alpha_i & \beta_i \\ \gamma_i & \delta_i 	\end{pmatrix}$.

Let us recall that for a trace class operator $A:V^+\to V^+$, one defines its determinant by $\det (\operatorname{Id}+A)=\sum_{i=0}^{\infty} \operatorname{Tr}\wedge^i A$. We address de reader to  \cite{Gro,Simon} for details. Further, for a trace class operator $A$ it holds that $\operatorname{Id}+A$ is invertible if and only if its determinant does not vanish. This notion of determinant can be extended for those operators $A$ such that $A^n$ is of finite rank for some $n$ (see \cite{HP}). For instance, if $V^+=\k[[z]]$ and  $A:V^+\to V^+$ is the homothety of ratio $1+ a z^n$ with $n>0$, then $\det (\operatorname{Id}+A)$ is well defined and it is equal to $1$.  Similarly, the composite of the homothety $(1+ a z^n): V^+\to V$, with $n<0$ with the projection $V\to V/z^{-1}\k[z^{-1}]\simeq V^+$ has also a well defined determinant. The following two results are algebraic counterparts of those of \cite[\S3]{SegalWilson}. 

%

\begin{prop}\label{p:cocycle}
Let 
	\[
	G^*\, :=\, 
	\Big\{ S= \begin{pmatrix}	\alpha & \beta \\ \gamma & \delta 	\end{pmatrix}\text{ s.t. $\delta$ is invertible} \Big\}
	\,\subseteq \, G\, . 
	\]
Up to a coboundary, it holds that	
	\[
	\c( S_1 , S_2 )\,=\,\det\big(\delta_1 \delta_2 (\gamma_1\beta_2+\delta_1\delta_2)^{-1}\big)
	\]
	for all $S_1, S_2 \in G^*$ such that $S_3:=S_1\circ S_2\in G^*$.
\end{prop}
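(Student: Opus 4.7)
The plan is to construct a natural set-theoretic section $s\colon G^*\to\tilde G$ of the central extension, compute the associated $2$-cocycle $\c'$, and identify $\c'$ with the claimed formula modulo a coboundary. For $S=\begin{pmatrix}\alpha & \beta \\ \gamma & \delta\end{pmatrix}\in G^*$, invertibility of $\delta$ yields $S(V^+)=\{v+\beta\delta^{-1}v:v\in V^+\}$, i.e.\ $S(V^+)$ is the graph of $\beta\delta^{-1}\colon V^+\to V^-$. Hence the projection $\pi^+\colon V\to V^+$ along $V^-$ restricts to an isomorphism $\pi^+|_{S(V^+)}\colon S(V^+)\to V^+$, with inverse $\phi_S(v):=v+\beta\delta^{-1}v$; in particular $\phi_S=S|_{V^+}\circ\delta^{-1}$. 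For any $M\sim V^+$ with $M\subseteq V^+\cap S(V^+)$, the map $\phi_S$ descends to an isomorphism of finite-dimensional quotients $V^+/M\to S(V^+)/M$, whose top exterior power, via \eqref{e:S*DD} and \eqref{eq:A-M}, determines a canonical isomorphism $\sigma_S\colon(S^{-1})^*\D\to\D$. Setting $s(S):=(\sigma_S,S)$ gives the desired section, and the associated cocycle $\c'$ is defined by $s(S_1)s(S_2)=\c'(S_1,S_2)\,s(S_1S_2)$.

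The key algebraic identity is obtained by a direct block-matrix calculation: for $v\in V^+$,
\[
S_1\bigl(\phi_{S_2}(v)\bigr)\,=\,\beta_3\delta_2^{-1}v+\delta_3\delta_2^{-1}v\,=\,\phi_{S_3}\bigl(\delta_3\delta_2^{-1}v\bigr),
\]
where $\delta_3=\gamma_1\beta_2+\delta_1\delta_2$. Equivalently $\phi_{S_3}^{-1}\circ S_1\circ\phi_{S_2}=\delta_3\delta_2^{-1}$ as endomorphisms of $V^+$. To extract the cocycle I would choose $M$ contained in $V^+\cap S_1(V^+)\cap S_2(V^+)\cap S_3(V^+)$, combine $\sigma_{S_1}$ with the pullback $(S_1^{-1})^*\sigma_{S_2}$ via the explicit product rule \eqref{eq:product-sign}, and express everything in a basis indexed by $V^+/M$ using \eqref{eq:A-M}. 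Together with the identity above and the relation $\phi_S=S|_{V^+}\circ\delta_S^{-1}$, the ratio of $\sigma_{S_1}\cdot(S_1^{-1})^*\sigma_{S_2}$ to $\sigma_{S_3}$ is identified, modulo the coboundary $f(S_1)f(S_2)/f(S_1S_2)$ with $f(S):=\det\delta_S$, with $\det(\delta_1\delta_2\delta_3^{-1})$, yielding the claim.

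The technical core lies in this final comparison: the pullback $(S_1^{-1})^*\sigma_{S_2}$ naturally lives over the quotient by $S_1M$ rather than $M$, so the identifications \eqref{eq:A-M} must be combined carefully with the sign conventions of \eqref{eq:product-sign} to assemble the contributions $\det(\delta_1)$, $\det(\delta_2)$ and $\det(\delta_3)^{-1}$ with the correct exponents. Apart from this bookkeeping, every other step is either a natural construction (the section $s$) or a direct verification on $V^+$ (the key identity).
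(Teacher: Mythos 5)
Your proposal takes essentially the same route as the paper: both construct a set-theoretic section of the extension over $G^*$ by exploiting the invertibility of $\delta$, and both reduce the cocycle to the determinant of an operator on $V^+$ that differs from the identity by the finite-rank operator $\gamma_1\beta_2$. Your identity $\phi_{S_3}^{-1}\circ S_1\circ\phi_{S_2}=\delta_3\delta_2^{-1}$ is exactly the paper's observation about the composition \eqref{e:composition}, and your graph description of $S(V^+)$ replaces the paper's use of the complexes $S^{-1}{\mathcal U}\oplus\ker(\beta)\to V$ and the induced map $\bar\delta\colon V^+/\ker(\beta)\to V^+/\delta(\ker(\beta))$; since the statement is only claimed up to coboundary, this difference of sections is immaterial. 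Two points need repair, though neither is fatal. First, $\phi_S(m)=m+\beta\delta^{-1}m$ lies in $V^+$ only when $\beta\delta^{-1}m=0$, so $\phi_S$ descends to $V^+/M\to S(V^+)/M$ only for $M\subseteq\delta(\ker(\beta))$, not for arbitrary $M\subseteq V^+\cap S(V^+)$ as you assert; since $\delta(\ker(\beta))\sim V^+$ (because $\beta$ has finite rank), such an $M$ exists and the rest of the argument goes through unchanged. Second, the ``coboundary'' $f(S_1)f(S_2)/f(S_1S_2)$ with $f(S)=\det\delta_S$ is not well defined: $\delta_S$ is an automorphism of the infinite-dimensional space $V^+$ and has no determinant (indeed, the whole point of the proposition is that $\det(\delta_1\delta_2\delta_3^{-1})$ does \emph{not} factor as such a coboundary). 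No such correction is needed: your composite $\phi_{S_3}^{-1}\circ S_1\circ\phi_{S_2}\circ\delta_1^{-1}=\delta_3\delta_2^{-1}\delta_1^{-1}=(\delta_1\delta_2\delta_3^{-1})^{-1}$ is already identity plus finite rank, its determinant is (up to the usual direction-of-isomorphism convention) the claimed expression, and the only coboundary ambiguity is the one already allowed in the statement, coming from the choice of section.
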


\begin{proof}
The cocycle can be computed through the following procedure. We fix a set-theoretic section $\rho:  G\to \widetilde{G}$ of the projection of \eqref{eq:centralext}. Then, the cocycle fulfills the following relation
	\[
	\rho(S_1) \rho(S_2) \,=\, \c(S_1,S_2) \rho(S_3) 
	\]
or $\c(S_1,S_2) =\rho(S_1) \rho(S_2)  \rho(S_3)^{-1}$. Hence, the proof consists of finding a section $\rho$ defined on $G^*$. That is, to each $S\in G^*$ we associate an element $\rho(S)=(\sigma,S) \in \widetilde{G}$. 

Let $S\in \Gl(V)$ and consider its block decomposition as above. Then, there are natural isomorphisms
	\[
	V^+ / \ker(\beta) \,\overset{\sim}\to\, \operatorname{Im}(\beta) \,\overset{\sim}\to\, (V^++S(V^+))/V^+ \,.
	\]
Since $S\in \Gl(V)$ (see \eqref{eq:glv}), these three vector spaces are finite dimensional. That is, $V^+\sim  \ker(\beta)$ or, what amounts to the same, $\beta$ is of finite rank. 

Let $S\in G^*$. Having in mind \S\ref{subsec:group-law} and the properties of the determinant of a perfect complex, there are canonical isomorphisms
	\[
	{(S^{-1})}^*\D \,\simeq\, \Det\big(S^{-1}{\mathcal{U}}\oplus\ker(\beta)\to V\big)  \otimes \wedge V^+/\ker(\beta)
	\]
and
	\[
	\D\,\simeq \, \Det\big({\mathcal{U}}\oplus\delta(\ker(\beta))\to V\big)  \otimes \wedge V^+/\delta(\ker(\beta))\, . 
	\]
Observe now that the fact that $\delta$ is an isomorphism implies that: a) $S$ yields an isomorphism from the complex $S^{-1}{\mathcal{U}}\oplus\ker(\beta)\to V$ to the complex ${\mathcal{U}}\oplus\delta(\ker(\beta))\to V$; and, b)   $\delta$ gives an  isomorphism $\bar{\delta}: V^+/\ker(\beta) \overset{\sim}\to V^+/\delta(\ker(\beta))$ (note that these vector spaces are of finite dimension). Accordingly, we obtain an isomorphism ${(S^{-1})}^* \D \overset{\sim}\to  \D$ which will be denoted by $\det(\bar{\delta})$. The desired section is defined by sending $S$ to $\rho(S)=(\det(\bar{\delta}),S)$.

Finally, for $S_i= \begin{pmatrix}	\alpha_i & \beta_i \\ \gamma_i & \delta_i 	\end{pmatrix} \in G^*$ as in the statement, one has that $\delta_3= (\gamma_1 \beta_2+ \delta_1 \delta_2)$. Having in mind that the composition
\begin{equation}\label{e:composition}
V^+ \, \overset{\delta_1^{-1}}{\longrightarrow} \, V^+ 
\, \overset{\delta_2^{-1}}{\longrightarrow} \, V^+ 
\, \overset{(\gamma_1 \beta_2+ \delta_1 \delta_2)}{\longrightarrow} \, V^+ \end{equation}
has a well defined determinant (since it is the identity on $\delta_1(\delta_2(\ker(\beta_2)))$ and $	V^+ / \delta_1(\delta_2(\ker(\beta_2)))$ is finite dimensional), one checks straightforwardly that $\c(S_1,S_2) = \det\big(\delta_1 \delta_2 (\gamma_1\beta_2+\delta_1\delta_2)^{-1}\big)$.
\end{proof}
	
\begin{cor}\label{c:cLie=tr}
	Let  $S_i=\begin{pmatrix}	\alpha_i & \beta_i \\ \gamma_i & \delta_i 	\end{pmatrix} \in \g$ for $i=1,2$. It  then holds that 	
		\[
		\c_{Lie}(S_1,S_2) \,=\,   \tr(\gamma_2 \beta_1- \gamma_1\beta_2)\,.
		\]
\end{cor}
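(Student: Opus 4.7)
The strategy is to apply Proposition~\ref{p:cocycle} to the pair $1+\epsilon_1 S_1$ and $1+\epsilon_2 S_2$ in $G(\k[\epsilon_1,\epsilon_2]/(\epsilon_1^2,\epsilon_2^2))$ and then extract $\c_{Lie}$ from the defining identity \eqref{eq:Lie-cocycle}. First I would observe that $1+\epsilon_i S_i$ has block form
\[
1+\epsilon_i S_i \,=\, \begin{pmatrix} 1+\epsilon_i\alpha_i & \epsilon_i\beta_i \\ \epsilon_i\gamma_i & 1+\epsilon_i\delta_i \end{pmatrix},
\]
whose lower-right block is invertible with inverse $1-\epsilon_i\delta_i$ since $\epsilon_i^2=0$. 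Hence both elements belong to $G^*$ and the formula of Proposition~\ref{p:cocycle} is available.

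Next I would compute the argument of that determinant. Writing $A=1+\epsilon_1 S_1$ and $B=1+\epsilon_2 S_2$, one has $\gamma_A\beta_B=\epsilon_1\epsilon_2\gamma_1\beta_2$ and $Y:=\delta_A\delta_B=1+\epsilon_1\delta_1+\epsilon_2\delta_2+\epsilon_1\epsilon_2\delta_1\delta_2$. Setting $X:=\gamma_A\beta_B+Y=Y+\epsilon_1\epsilon_2\gamma_1\beta_2$, the relation $(\epsilon_1\epsilon_2)^2=0$ together with $Y^{-1}\epsilon_1\epsilon_2=\epsilon_1\epsilon_2$ gives
\[
Y X^{-1} \,=\, 1-\epsilon_1\epsilon_2\gamma_1\beta_2\, .
\]
Since $\beta_2$ is of finite rank (as observed in the proof of Proposition~\ref{p:cocycle}), so is $\gamma_1\beta_2$, and the expansion $\det(1+tM)=1+t\tr(M)\pmod{t^2}$ with $t=-\epsilon_1\epsilon_2$ yields
\[
\c(1+\epsilon_1 S_1,\,1+\epsilon_2 S_2)\,=\,1-\epsilon_1\epsilon_2\tr(\gamma_1\beta_2).
\]

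Interchanging the roles of $S_1$ and $S_2$ and using $\epsilon_1\epsilon_2=\epsilon_2\epsilon_1$ produces $\c(1+\epsilon_2 S_2,\,1+\epsilon_1 S_1)=1-\epsilon_1\epsilon_2\tr(\gamma_2\beta_1)$. Substituting both into \eqref{eq:Lie-cocycle} and inverting modulo $(\epsilon_1\epsilon_2)^2=0$ gives
\[
1+\epsilon_1\epsilon_2\,\c_{Lie}(S_1,S_2)\,=\,1+\epsilon_1\epsilon_2\bigl(\tr(\gamma_2\beta_1)-\tr(\gamma_1\beta_2)\bigr),
\]
and reading off the coefficient of $\epsilon_1\epsilon_2$ yields the stated formula.

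The only subtlety I expect is the (mild) abuse of applying Proposition~\ref{p:cocycle} with coefficients in the Artin $\k$-algebra $\k[\epsilon_1,\epsilon_2]/(\epsilon_1^2,\epsilon_2^2)$; this is legitimate by the functorial extension of the whole formalism described in \S\ref{ss:Lie}, and the finite-rank hypothesis needed to make sense of the determinant in \eqref{e:composition} holds automatically because $\gamma_A\beta_B$ is already proportional to the nilpotent $\epsilon_1\epsilon_2$. The fact that Proposition~\ref{p:cocycle} is stated only up to a coboundary is harmless, since the right-hand side of \eqref{eq:Lie-cocycle} depends only on the cohomology class of $\c$.
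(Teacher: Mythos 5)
Your proof is correct and follows essentially the same route as the paper: apply Proposition~\ref{p:cocycle} to $1+\epsilon_1 S_1$ and $1+\epsilon_2 S_2$, expand the determinant to first order in $\epsilon_1\epsilon_2$ to get $1-\epsilon_1\epsilon_2\tr(\gamma_1\beta_2)$, swap the roles, and divide via \eqref{eq:Lie-cocycle}. Your version is in fact slightly more careful than the paper's on two points the paper leaves implicit: the $\epsilon_i$ factors on the off-diagonal blocks of $1+\epsilon_i S_i$, and the justification that the coboundary ambiguity in Proposition~\ref{p:cocycle} is harmless because the commutator depends only on the cohomology class.
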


\begin{proof}
Note that $1+\epsilon_i S_i = \begin{pmatrix}	1+\epsilon_i \alpha_i & \beta_i \\ \gamma_i & 1+\epsilon_i  \delta_i 	\end{pmatrix}\in\g $ is a point of $G^*$ with values in $\k[\epsilon_i]/\epsilon_i^2$ and, applying Proposition~\ref{p:cocycle}, we obtain
	\[
	\begin{aligned}
	\c(1+ & \epsilon_1 S_1   , 1+\epsilon_2 S_2) \, =\\ &= \,
	\det\big((1+\epsilon_1  \delta_1 ) (1+\epsilon_2  \delta_2) ( \epsilon_1\epsilon_2\gamma_1\beta_2 +(1+\epsilon_1  \delta_1 ) (1+\epsilon_2  \delta_2))^{-1}\big)
	\, =\\ &= \,
	\det\big((1+\epsilon_1  \delta_1 +\epsilon_2  \delta_2 +\epsilon_1  \epsilon_2 \delta_1  \delta_2) (1- \epsilon_1  \delta_1 -\epsilon_2  \delta_2  +  \epsilon_1\epsilon_2(  \delta_2\delta_1 - \gamma_1\beta_2 )\big)
		\, =\\ &= \,
	1 - \epsilon_1\epsilon_2 \tr(\gamma_1\beta_2 )\, . 
	\end{aligned}
	\]
Similarly, one gets that $\c(1+  \epsilon_2 S_2   , 1+\epsilon_1 S_1) = 1 - \epsilon_1\epsilon_2 \tr(\gamma_2\beta_1 )$. Recalling~\eqref{eq:Lie-cocycle}, and that $\tr$ is additive on the space of endomorphisms of finite rank (\cite{Ta}), one concludes.

\end{proof}
%

\subsection{Ring of Power Series}

Let $\k'$ be an artinian $\k$-algebra and $V=\k'((z))$, $V^+=\k'[[z]]$. Let $G=\k'((z))^*$. Let $\nu:V\setminus\{0\}\to \Z$ be the $z$-adic valuation. For a series $g\in G$, the winding number of the homothety defined by $g$ fulfills
	\[
	w(g)\,=\, \nu(g)\cdot  \dim_{\k}\k'
	\, . \]

\begin{prop}\label{p:Com-cocycle}
	Let $S,T\in G$.  It holds that 	
	\begin{equation}\label{eq:Comm-cocycle}
	\langle S,T\rangle  \, = \, \operatorname{Norm}_{\k'/\k} \Big( \frac{S^{\nu(T)}}{T^{\nu(S)}}(0)\Big)
	\end{equation}
where, for $S\in \k'[[z]]$, $S(0)$ denotes the class of $S$ in $\k'[[z]]/z\k'[[z]]$.
\end{prop}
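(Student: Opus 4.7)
The plan is to exploit the bimultiplicativity and skew-symmetry of the commutator to reduce to a handful of cases. Writing $S=u_S z^{\nu(S)}$ and $T=u_T z^{\nu(T)}$ with $u_S,u_T\in\k'[[z]]^*$, bilinearity gives
\[
\langle S,T\rangle \,=\, \langle u_S,u_T\rangle \cdot \langle u_S,z\rangle^{\nu(T)} \cdot \langle z,u_T\rangle^{\nu(S)} \cdot \langle z,z\rangle^{\nu(S)\nu(T)}.
\]
Since $\langle z,z\rangle=1$ (the commutator of an element with itself is trivial) and $\langle z,u_T\rangle=\langle u_T,z\rangle^{-1}$ by skew-symmetry, it will suffice to prove (i) $\langle u,v\rangle=1$ for $u,v\in\k'[[z]]^*$, and (ii) $\langle u,z\rangle=\operatorname{Norm}_{\k'/\k}(u(0))$ for $u\in\k'[[z]]^*$. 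Substitution then yields \eqref{eq:Comm-cocycle}, since $S^{\nu(T)}/T^{\nu(S)}=u_S^{\nu(T)}/u_T^{\nu(S)}$ belongs to $\k'[[z]]^*$ and evaluates at $z=0$ to $u_S(0)^{\nu(T)}/u_T(0)^{\nu(S)}$.

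For (i), I would argue that every $u\in\k'[[z]]^*$ satisfies $m_u(V^+)=V^+$, so functoriality of the determinant bundle produces a canonical isomorphism $\sigma_u:(m_u^{-1})^*\D\overset{\sim}{\to}\D$. The assignment $u\mapsto\tilde{u}=(\sigma_u,m_u)$ is a group homomorphism $\k'[[z]]^*\to\tilde{G}$ lifting the inclusion, so commuting units admit commuting lifts in $\tilde{G}$ and $\langle u,v\rangle=1$.

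For (ii), the main step, I would first invoke Remark~\ref{r:norm-det} to reduce to showing that the analogous commutator in the $(\k')^*$-central extension over $\k'$ equals $u(0)$; pushing out by $\operatorname{Norm}_{\k'/\k}$ then produces the $\k^*$-valued commutator. Working over $\k'$, formula \eqref{e:S*DD} applied to $S=m_z$, combined with $zV^+\cap V^+=zV^+$ and $V^+/zV^+\simeq\k'$, yields
\[
(m_z^{-1})^*\D \,\simeq\, \D\otimes \wedge_{\k'}(V^+/zV^+) \,\simeq\, \D\otimes\k'.
\]
A lift $\tilde{z}=(\sigma_z,m_z)$ therefore amounts to choosing a generator of the auxiliary line $\wedge_{\k'}(V^+/zV^+)$, while the canonical $\tilde{u}$ of (i) preserves this tensor decomposition and acts on the auxiliary factor through the induced action of $m_u$ on $V^+/zV^+\simeq\k'$, namely multiplication by $u(0)$. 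Computing $\tilde{u}\tilde{z}\tilde{u}^{-1}\tilde{z}^{-1}$ via \eqref{eq:GltildeLaw} then identifies the commutator as precisely this scalar.

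The hard part will be making this last step rigorous: one must verify that the ratio of $\sigma_u\circ(m_u^{-1})^*\sigma_z$ and $\sigma_z\circ(m_z^{-1})^*\sigma_u$, viewed as isomorphisms $(m_{uz}^{-1})^*\D\to\D$, reduces on a fiber to the $m_u$-action on $\wedge_{\k'}(V^+/zV^+)$. The canonical identifications of \S\ref{subsec:group-law} must be tracked carefully through this computation, though no sign from \eqref{eq:product-sign} intervenes since $\beta_u=\beta_z=0$ in the decomposition $V=V^-\oplus V^+$.
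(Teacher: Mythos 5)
Your proposal is correct and follows essentially the same route as the paper: reduce by skew-symmetry and bimultiplicativity of the commutator to a handful of basic cases, then compute each case by tracking canonical isomorphisms of determinant lines (the unit--unit commutator is trivial because the canonical lifts give a homomorphic section, and the unit--$z$ commutator is the induced action on $\wedge(V^+/zV^+)\simeq\k'$, pushed down to $\k^*$ via the norm as in Remark~\ref{r:norm-det}). The only cosmetic difference is that you factor $S=u_S z^{\nu(S)}$ with a single unit, whereas the paper further splits the unit as a constant times the infinite product $\prod_{i>0}(1+s_i z^i)$ and treats those factors separately.
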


\begin{proof}
For the sake of clarity, let us assume first that $\k=\k'$. Observe that the commutator takes values in the center of $\tilde{G}$
		\[
		\langle S,T\rangle  \, := \, 
		\tilde{S}\tilde{T}\tilde{S}^{-1}\tilde{T}^{-1} \, \in \,   \operatorname{Z}(\tilde{G}) = \mathbb{G}_m \, . 
		\]

Let us write $\tilde{S}=(\sigma,S)$, $\tilde{T}=(\tau,T) \in \tilde{G}$. The commutator fulfills the relation
	\[
	\tilde{S}\tilde{T} \, =\, \langle S,T\rangle   \tilde{T} \tilde{S} \, . 
	\]
We compute the products using the definition of the group law
	\[
	\tilde{S}\tilde{T} \, =\, (\sigma,S)(\tau,T) \,=\, 
	( \sigma \circ {(S^{-1})}^*(\tau)   , ST)
	\]
	\[
	\tilde{T}\tilde{S}  \, =\, (\tau,T) (\sigma,S) \,=\, 
	(\tau \circ {(T^{-1})}^*(\sigma)  , T S)
	\]
and therefore $ \sigma \circ {(S^{-1})}^*(\tau) = \langle S,T\rangle  \tau \circ {(T^{-1})}^*(\sigma)  $; that is, since $S$ and $T$ commute, the following two morphisms 
	\[
	\begin{gathered} 
	{(S^{-1})}^* {(T^{-1})}^* \D \overset{{(S^{-1})}^*(t)}\longrightarrow    {(S^{-1})}^*\D \overset{\sigma}\longrightarrow  \D  
	\\
	{(T^{-1})}^* {(S^{-1})}^* \D \overset{{(T^{-1})}^*(\sigma)}\longrightarrow {(T^{-1})}^*\D \overset{t}\longrightarrow \D
	\end{gathered}
	\]
coincide up to multiplication by  $\langle S,T\rangle$. In order to compare these morphisms, we will compute the elements associated to them in the corresponding exterior algebras as it was discussed in \S\ref{subsec:extensions}
. 

Recall that  $\sigma: {(S^{-1})}^*\D \to \D$ can be interpreted as an element of 
	\begin{equation}\label{eq:wedge3}
	\wedge ( V^+/(S(V^+)\cap V^+))^* \otimes \wedge( S(V^+) /(S(V^+)\cap V^+))
	\end{equation}
and $\tau: {(T^{-1})}^*\D \to \D$ is an element of 
	\begin{equation}\label{eq:wedge1}
	\wedge ( V^+/(T(V^+)\cap V^+))^* \otimes \wedge( T(V^+) /(T(V^+)\cap V^+))
	\end{equation}
Accordingly, ${(T^{-1})}^*(\sigma)$ lies in
	\begin{equation}\label{eq:wedge2}
	 \wedge ( T(V^+)/(TS(V^+)\cap T(V^+)))^* \otimes \wedge( TS(V^+) /(TS(V^+)\cap T(V^+)))
	\end{equation}
and ${(S^{-1})}^*(\tau)$ belongs to
	\begin{equation}\label{eq:wedge4}
	 \wedge ( S(V^+)/(ST(V^+)\cap S(V^+)))^* \otimes \wedge( ST(V^+) /(ST(V^+)\cap S(V^+)))	 \, . 
	\end{equation}
Hence, $\tau \cdot {(T^{-1})}^*(\sigma) $ is an element of the tensor product of \eqref{eq:wedge1} and \eqref{eq:wedge2} while $\sigma \cdot {(S^{-1})}^*(\tau)$ belongs to the tensor product \eqref{eq:wedge3}$\otimes$\eqref{eq:wedge4}. 


Using the properties of the commutator, we reduce the proof to a bunch of cases as follows. Indeed, for $S,T\in G=\k((z))^*$, one can find unique expressions 
	\[
	\begin{gathered}
	S\,=\, s_0 z^s \prod_{i>0}(1 +s_i z^i) \\
	 T\,=\, t_0 z^t \prod_{i>0}(1 +t_i z^i)
	 \end{gathered}
	 \]
where $s_0,t_0\in \k^*$, $s,t\in {\mathbb Z}$, $s_i,t_i\in \k$. 

Bearing in mind that the commutator  is skew-symmetric and bi-multiplicative, it suffices to check the following cases.

\textit{Case 1}. $\langle s_0, z^t\rangle$ with $t>0$. In this case we must find a canonical isomorphism from $\wedge (V^+ / z^t V^+)^*$ to  $\wedge(s_0 V^+ / s_0 z^t V^+)^*$. It is clear that the identity does the job and that, choosing a basis in $V^+$ and those induced in these quotients, the determinant of the identity is the homothety $s_0^t$; that is, we have shown $\langle s_0, z^t\rangle= s_0^t$. 

\textit{Case 2}. $\langle 1 +s_i z^i, z^t\rangle$ with $t>0$. Observe that $ 1 +s_i z^i$ is invertible and $( 1 +s_i z^i)V^+=V^+$ since $i>0$. Again, the identity of $V^+$ induces an isomorphism from $\wedge (V^+ / z^t V^+)^*$ to  $\wedge(( 1 +s_i) V^+ / ( 1 +s_i) z^t V^+)^*$. The latter isomorphism is the determinant, which is $1$; that is, $\langle 1 +s_i z^i, z^t\rangle=1$.

\textit{Case 3}. $\langle z^s, z^t\rangle$ with $t\geq s>0$. The isomorphism from 
$\wedge (z^t V^+/z^{s+t}V^+)^* \otimes \wedge (V^+/z^tV^+)^*$ to $\wedge (z^s V^+/ z^{s+t} V^+)^* \otimes \wedge(V^+/z^s V^+)^*  $  induced by the identity, which is a reordering, is $(-1)^{(t-s)(s+s)}=1$. 

\textit{Case 4}. $\langle 1 +s_i z^i, t_0\rangle$ and $\langle 1 +s_i z^i, 1+t_j z^j\rangle$. Bearing in mind \eqref{e:S*DD} and that $(1+s_i z^i)V^+=V^+$, it follows that there exists a canonical isomorphism $((1+s_i z^i)^{-1})^*\D\simeq \D$. Analogously, $(t_0^{-1})^*\D\simeq \D$ canonically. Thus $\langle 1 +s_i z^i, t_0\rangle=1$. The same arguments show that $\langle 1 +s_i z^i, 1+t_j z^j\rangle=1$.

For the general case, $\k'$ arbitrary,  the same arguments show that $\langle s_0, z^t\rangle= \operatorname{Norm}_{\k'/\k}(s_0)^{v_z(T)}$. See also Remark~\ref{r:norm-det}.  

Putting everything together, the claim follows.
\end{proof}


Let $(V=\k((z)),V^+=\k[[z]])$ and $G=\k((z))^*$. Let $R$ be a local artinian ring with maximal ideal $\mathfrak{m}$ and residue field $\k$ and recall that the set of $R$-valued points of $G$ is $G(R)=R((z))^*$. Then, consider the subfunctor of $G^*$ (recall the definition of $G^*$ from Proposition~\ref{p:cocycle}) defined by
\[
G_1(R)\,:=\, G(R) \times_{G(\k)}\{1\} \,\subseteq \, G^*(R)  \, . 
\]
Recall that elements of $G_1(R)$ can be expressed as products
    \[G_1(R)\,=\, 
    \left\{ 
    \begin{gathered}
    \prod_{i>0} (1-\bar a_i z^{-i}) \prod_{i\geq 0} (1-a_i z^i)\in R((z)) \text{ such that } a_i\in\mathfrak{m} \,\forall i> 0, 
    \\
    \text{$\bar a_i=0$ for almost all $i<0$ and $\bar a_i$ nilpotent for all $i<0$}
    \end{gathered} \right\}\]
Bearing in mind the construction of the central extension as well as Remark~\ref{r:norm-det}, we obtain a central extension of the group $G_1(R)$ by $\k^*$. 

\begin{prop}\label{p:Con-Carr}
	Let $f= \prod (1-\bar a_i z^{-i}) \prod (1-a_i z^i)\in R((z)) ,g=\prod (1-\bar b_i z^{-i}) \prod (1-b_i z^i)  \in G_1(R)$. Then, the commutator is
	\[
	\langle f, g \rangle\,=\, 
	\operatorname{Norm}_{R/\k} \left( \frac{\prod_{i,j}\big(1-  a_i^{\frac{j}{(i,j)}}\bar b_j^{\frac{i}{(i,j)}} \big)^{(i,j)}}
	{\prod_{i,j}\big(1-  b_i^{\frac{j}{(i,j)}}\bar a_j^{\frac{i}{(i,j)}} \big)^{(i,j)}}
	\right)
	\]
where the products run over $i,j>0$. 
\end{prop}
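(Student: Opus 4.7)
\emph{Plan.} By the bi-multiplicativity and skew-symmetry of $\langle\cdot,\cdot\rangle$, expanding $f$ and $g$ into their basic factors reduces the computation to commutators $\langle 1-c_1z^{n_1},1-c_2z^{n_2}\rangle$. When $n_1,n_2\geq 0$, both factors preserve $V^+$, so $\beta_1=\beta_2=0$ in the block decomposition of Proposition~\ref{p:cocycle}, yielding $\c(\cdot,\cdot)=\det(\delta_1\delta_2(\delta_1\delta_2)^{-1})=1$ in both orders, hence trivial commutator; symmetrically, when $n_1,n_2\leq 0$ both factors preserve $V^-$ ($\gamma_1=\gamma_2=0$) and the commutator is again trivial. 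This accounts for the absence of same-sign products in the statement, and also for the vanishing of the contributions of the ``scalar'' factors $1-a_0$ and $1-b_0$.

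\emph{Key basic commutator.} It remains to prove that, for $i,j>0$, $a\in\mathfrak{m}$ and $\bar b$ nilpotent, one has
\[
\langle 1-az^i,\,1-\bar b z^{-j}\rangle \,=\, \bigl(1-a^{j/d}\bar b^{i/d}\bigr)^{d}, \qquad d:=(i,j),
\]
after which the denominator in the statement arises by skew-symmetry (swapping the roles of positive and negative exponents), and the norm $\operatorname{Norm}_{R/\k}$ comes out by base change as in Remark~\ref{r:norm-det}. Set $\delta_f=I-aM_i$ and $\delta_g=I-\bar b L_j$, where $M_i:V^+\to V^+$ is multiplication by $z^i$ and $L_j$ is the truncated shift $z^k\mapsto z^{k-j}$ if $k\geq j$, zero otherwise; both operators are invertible because $a,\bar b$ are nilpotent, so the geometric-series inverses are finite sums. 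Since $fg=gf$, Proposition~\ref{p:cocycle} gives
\[
\langle f,g\rangle \,=\, \frac{\c(f,g)}{\c(g,f)} \,=\, \det\bigl(\delta_f\delta_g\delta_f^{-1}\delta_g^{-1}\bigr)\,=\,\det\bigl(I+[\delta_f,\delta_g]\delta_f^{-1}\delta_g^{-1}\bigr),
\]
a Fredholm determinant of a finite-rank perturbation of the identity, since a direct calculation yields $[\delta_f,\delta_g]=a\bar b(M_iL_j-L_jM_i)$, a finite-rank operator.

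\emph{Reduction to coprime case, and main obstacle.} The operators $M_i$ and $L_j$ preserve the decomposition $V^+=\bigoplus_{r=0}^{d-1}V^+_r$ by the residue of the exponent modulo $d$, so the Fredholm determinant splits as a product of $d$ equal factors; after identifying $V^+_r\cong R[[w]]$ via $w=z^d$, the problem reduces to the case $(i,j)=1$. The main obstacle is then the explicit evaluation in the coprime case: the image of $[\delta_f,\delta_g]\delta_f^{-1}\delta_g^{-1}$ lies in the finite-dimensional subspace $\operatorname{span}\{z^m:\max(i-j,0)\leq m\leq i-1\}$, and the corresponding matrix entries are sums over the nonnegative lattice solutions $(m,n)\in\Z_{\geq 0}^{2}$ of $mi-nj=c$, rendered finite by the nilpotence of $a$ and $\bar b$. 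A careful combinatorial bookkeeping of these solutions identifies the determinant with $1-a^{j}\bar b^{i}$; reinserting the $d$ identical factors from the block decomposition, together with the norm of Remark~\ref{r:norm-det}, completes the proof.
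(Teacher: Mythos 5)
Your proposal follows essentially the same route as the paper: reduce by bi-multiplicativity and skew-symmetry to basic commutators, dispose of the same-sign (and scalar) pairs by noting that $\beta_1=\beta_2=0$ or $\gamma_1=\gamma_2=0$ kills both cocycles via Proposition~\ref{p:cocycle}, and then compute the mixed case as $\det_R(\delta_1\delta_2\delta_1^{-1}\delta_2^{-1})$, with the norm supplied by Remark~\ref{r:norm-det}. The one genuine variation is your treatment of the mixed case: you split $V^+$ by the residue of the exponent modulo $d=(i,j)$, observe that $M_i$ and $L_j$ preserve each summand $V^+_r\cong R[[w]]$, and so reduce to the coprime case, whereas the paper writes the matrix of $\delta_1\delta_2\delta_1^{-1}\delta_2^{-1}$ explicitly in the basis $\{z^k\}$ and reads off its block structure directly; your reduction is cleaner and explains the exponent $(i,j)$ conceptually rather than extracting it from the block $N$. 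That said, the step where the formula actually gets produced --- the evaluation of the determinant as $1-a^{j}\bar b^{i}$ in the coprime case --- is only asserted in your write-up (``careful combinatorial bookkeeping''); the paper is scarcely more explicit there (it asserts $\det_R(N)=(1-a^{n/(m,n)}\bar b^{m/(m,n)})^{(m,n)}$ after displaying the matrix), but since this is the entire content of the stated formula you should at least exhibit the finite matrix on $\operatorname{span}\{z^m:\max(i-j,0)\leq m\leq i-1\}$ and carry out its determinant, as everything before that point is routine.
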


\begin{proof}
	Since the commutator is skew-symmetric and bi-multiplicative, it suffices to compute the following cases.

\textit{Case 1}. $f=1-a z^{m} $ and $g= 1 - \bar b z^{-n}$ where $m>0, n>0$, $a\in \mathfrak{m}$ and $\bar b$ is nilpotent; that is, one has to compute the expression
		\[
		\langle 1-a z^{m} , 1 - \bar b z^{-n} \rangle \, . 
		\]
Bearing in mind Proposition~\ref{p:cocycle} we obtain that 
	\[
	\langle f, g \rangle\,=\,  \frac{\c(f,g)}{\c(g,f)} \,=\, 
	\operatorname{Norm}_{R/\k} \Big( \frac{\det_R(\delta_1 \delta_2(\gamma_1\beta_2+ \delta_1 \delta_2)^{-1})}
	{\det_R (\delta_2 \delta_1 (\gamma_2\beta_1+ \delta_2 \delta_1)^{-1})}\Big) 
	\]
where $S_1$ (resp. $S_2$) is the homotethy of ratio $f$ (resp. $g$). Observe that $\gamma_2=0$ and, thus, $\delta_3 = \gamma_1\beta_2+ \delta_1 \delta_2 = \delta_2 \delta_1$. Then, the denominator equals $1$ and the commutator reduces to the inverse of the determinant of \eqref{e:composition} or, what is tantamount, $\det_R(\delta_1 \delta_2  \delta_1^{-1} \delta_2^{-1})$.   A careful but  straightforward computation shows that the matrix associated to $\delta_1 \delta_2  \delta_1^{-1} \delta_2^{-1}\vert_{V^+}$ with respecto to the basis $\{z^i\vert i\geq 0\}$ is given by $M=(m_{ij})$ where
	\[
	m_{ij}\,=\, \delta_{i,j} -
	\begin{cases}
	 a\bar b^{\lceil \frac{j}{n}\rceil} & \text{ if } m-n< i \leq m
	 \\
	 0 & \text{ otherwise}
	\end{cases}
	\]
where $\delta_{i,j} $ is $1$ for $i=j$ and $0$ otherwise; and $\lceil x \rceil$ denotes the lowest integer equal or bigger than (the rational number) $x$. 

Further, $M$ acquires the following expression:
	\[
	M\,=\, \left(\begin{array}{c|c|c}
	\operatorname{Id}_{m-n} & 0 & 0
	\\ \hline
	\ast & N & \ast 
	\\ \hline
	0 & 0 & \begin{matrix} 1 & 0 \\ 0 & \ddots \end{matrix}
	\end{array}\right)
	\]
where the top raw does not appear for $m-n\leq 0$. Thus, $\det_R(M)$ exists and it is equal to  the determinant of the $n\times n$-matrix $N$. Since $\det_R(N)=(1- a^{\frac{n}{(m,n)}} \bar b^{ \frac{m}{(m,n)}})^{(m,n)}$, we are done.

\textit{Case 2}. $f=1-a $ and $g= 1 - \bar b z^{-n}$ where $n>0$, $a\in \mathfrak{m}$ and $\bar b$ is nilpotent or $f=1-a z^{m} $ and $g= 1 - \bar b$ where $m>0$, $a\in \mathfrak{m}$ and $\bar b$ is nilpotent. In both situations, proceeding as in the previous case, one obtains that $M$ is the identity matrix, whose determinant is $1$. 

\textit{Case 3}.  
$f=1-\bar a z^{-m} $ and $g= 1 - \bar b z^{-n}$ where $m>0, n>0$, $\bar a\,\bar b$ are nilpotent. Applying Proposition~\ref{p:cocycle} as in the previous case, and noting that $\gamma_1=\gamma_2=0$, it follows that $\langle 1-\bar a z^{-m} , 1 - \bar b z^{-n} \rangle=1$.

\textit{Case 4}.  
$f=1-a z^{m} $ and $g= 1 - b z^{n}$ where $m>0, n>0$, $a, b\in \mathfrak{m}$. Applying Proposition~\ref{p:cocycle}  and noting that $\beta_1=\beta_2=0$, it follows that $\langle 1-a z^{m} , 1 - b z^{n} \rangle=1$.

\end{proof}



\subsection{Ring of Adeles}\label{ss:adeles}

Let $X$ be a proper, irreducible, non singular algebraic curve over a perfect field $\k$. Let $\Sigma_X$ be its function field. Let $\k$ be algebraically closed in $\Sigma_X$. 

For each closed point $x\in X$, let $A_x:=\widehat{\O}_{X,x}$ and $K_x:=(A_x)_{(0)}$. Let $\A_X := \prod'_{x\in X} K_x$ be the adele ring of $X$ and let $\A_X^+ := \prod_{x\in X} A_x$. We can consider the grassmannian of the pair $(\A_X,\A_X^+)$ as well as the constructions of \S\ref{subsec:extensions}
.

Let $\I_X$ denote the idele group of $\A_X$; that is, the group of invertible elements. Note that given $\alpha\in \I_X$, it make sense to consider the associated divisor $\div(\alpha):=\sum_{x\in X} v_x(\alpha)x \in \Div(X)$ since it is a finite sum. Observe that $\I_X = \Gl(1,\A_X)$ and that an idele acts on $\A_X$ by multiplication. For each $\alpha$, the degree of $\div(\alpha)$ coincides with the winding number of the homothety defined by it. Set $\I_X^0= \Gl^0(1,\A_X)$ be the subgroup of $\I_X$ of those ideles of degree $0$.  

Let $G$ be a subgroup of $\I_X^0$. 

\begin{thm}
Suppose we are given $S_x\in\Gl(K_x)$ such that $S:=\prod_x S_x\in \Gl^0(\A_X)$. Then $w_x(S_s)$, the winding number of $S_x\in\Gl(K_x,A_x)$, is $0$ for almost all $x\in X$ and 
	\[
	 \sum_{x\in X} w_x(S_x) \,=\, 0 \, . 
	\]
\end{thm}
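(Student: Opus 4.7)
The plan is to reduce the global winding number $w(S)$ to a sum of local winding numbers $w_x(S_x)$ by exploiting the product decomposition $\A_X^+ = \prod_{x\in X} A_x$ and the fact that $S$ acts component-wise as $S = \prod_x S_x$.

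First I would observe that for each $x$, the intersection $A_x \cap S_x(A_x)$ and the sum $A_x + S_x(A_x)$ are $A_x$-submodules of $K_x$, and that the condition $S_x\in\Gl(K_x)$ forces both quotients $A_x/(A_x\cap S_x(A_x))$ and $S_x(A_x)/(A_x\cap S_x(A_x))$ to be finite dimensional over $\k$. Since $S=\prod_x S_x$ acts diagonally on $\A_X = \prod'_x K_x$, one has the component-wise identifications
\begin{equation*}
\A_X^+ \cap S(\A_X^+) \,=\, \prod_{x\in X}\bigl(A_x \cap S_x(A_x)\bigr),
\qquad
\A_X^+ + S(\A_X^+) \,=\, \prod'_{x\in X}\bigl(A_x + S_x(A_x)\bigr),
\end{equation*}
so that, passing to quotients, there are canonical isomorphisms
\begin{equation*}
\A_X^+/(\A_X^+\cap S(\A_X^+))\,\simeq\,\bigoplus_{x\in X} A_x/(A_x\cap S_x(A_x)),
\end{equation*}
and similarly for $S(\A_X^+)/(\A_X^+\cap S(\A_X^+))$.

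Next I would use the hypothesis $S\in\Gl(\A_X)$, which means $\A_X^+ \sim S(\A_X^+)$, i.e. the whole right-hand sum of dimensions is finite. Since each summand is a non-negative integer, only finitely many can be nonzero; consequently, for almost every $x\in X$ we have $A_x = A_x\cap S_x(A_x) = S_x(A_x)$, which forces $w_x(S_x)=0$. This gives the first assertion.

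Finally, taking $\k$-dimensions term by term in the two direct-sum decompositions and subtracting yields
\begin{equation*}
w(S) \,=\, \sum_{x\in X}\Bigl[\dim_{\k}\bigl(A_x/(A_x\cap S_x(A_x))\bigr) - \dim_{\k}\bigl(S_x(A_x)/(A_x\cap S_x(A_x))\bigr)\Bigr] \,=\, \sum_{x\in X} w_x(S_x),
\end{equation*}
and the hypothesis $S\in\Gl^0(\A_X)$ is precisely $w(S)=0$, giving the second assertion.

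The only subtle point, and the step I would be most careful about, is the identification of $\A_X^+\cap S(\A_X^+)$ with $\prod_x (A_x\cap S_x(A_x))$: one must verify that the restricted-product formalism of the adeles is compatible with this intersection, which boils down to the fact that commensurability of $A_x$ with $S_x(A_x)$ holds for every $x$ (from $S_x\in\Gl(K_x)$), so no convergence issues arise when reassembling the local data into a global element.
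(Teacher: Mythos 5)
Your proposal is correct and follows essentially the same route as the paper: decompose $V^+/(V^+\cap S(V^+))$ and $S(V^+)/(V^+\cap S(V^+))$ as products of the corresponding local quotients, use finite-dimensionality to force almost all factors to vanish, and then sum dimensions to identify $w(S)$ with $\sum_x w_x(S_x)$. The only cosmetic difference is that you write the quotient as a direct sum where the paper (more precisely) writes it as a product, but since finite-dimensionality forces all but finitely many factors to be zero this makes no difference to the argument.
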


\begin{proof}
The fact that $S=\prod_x S_x$ implies that $S(V^+)=\prod_x S_x(A_x)$ and, thus, it holds that
	\[
	 V^+/(V^+\cap S(V^+)) \,=\, \prod_{x\in X} (A_x/(A_x\cap S_x(A_x))) \, . 
	\]
Since the l.h.s. is a finite dimensional vector space, it follows that each term of the r.h.s. vanishes for almost all $x\in X$. Arguing similarly with $S(V^+)/(V^+\cap S(V^+))$, and taking their dimensions as $\k$-vector spaces, the result follows.  
\end{proof}

\begin{thm}\label{t:wx=1}
If $S=\prod_x S_x, T = \prod_x T_x \in \Gl^0(\A_X)$ are as above, then $ \langle S_x, T_x\rangle =1$ for almost all $x\in X$ and
	\begin{equation}\label{eq:dirsum-cocycle}
	\langle S,T \rangle \,=\, \prod_{x\in X} (-1)^{w_x(S) w_x(T)} \langle S_x, T_x \rangle \, . 
	\end{equation}
\end{thm}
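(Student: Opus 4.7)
My plan is to decompose the global commutator as an ordered product of local ones, with a sign coming from the Koszul sign rule of the exterior algebra. I would first show that $\langle S_x,T_x\rangle=1$ for almost all $x$. Since $S(\A_X^+)\sim\A_X^+$ and $S=\prod_x S_x$ acts componentwise on $\A_X=\prod'_x K_x$, the finite-dimensional quotients $\A_X^+/(\A_X^+\cap S(\A_X^+))$ and $S(\A_X^+)/(\A_X^+\cap S(\A_X^+))$ decompose as finite direct sums over $x$, forcing $S_x(A_x)=A_x$ outside a finite set $\Sigma_S\subset X$; define $\Sigma_T$ analogously. For $x\notin\Sigma_S\cup\Sigma_T$, the identification~\eqref{e:S*DD} specialises to a canonical splitting of the local central extension compatible with both $S_x$ and $T_x$, so $\langle S_x,T_x\rangle=1$ there, and the right-hand side of~\eqref{eq:dirsum-cocycle} is a finite product.

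Next I would factorise the global data as an ordered tensor product over places. Choose $M=\prod_x M_x$ commensurable with $\A_X^+$, with $M_x=A_x$ for $x\notin\Sigma_S\cup\Sigma_T$ and $M_x\subseteq A_x\cap S_x(A_x)\cap T_x(A_x)\cap S_xT_x(A_x)$ otherwise. The canonical isomorphism~\eqref{eq:A-M} reexpresses $\sigma$ and $\tau$ in terms of a common reference $\D_M$. Since $\A_X^+/M=\bigoplus_x A_x/M_x$ is a finite direct sum of finite-dimensional spaces, fixing an ordering of the nontrivial places yields canonical identifications $\wedge(\A_X^+/M)\simeq\bigotimes_x\wedge(A_x/M_x)$, and analogously for $S(\A_X^+)/M$, $T(\A_X^+)/M$ and $ST(\A_X^+)/M$. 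Under these identifications, $\sigma=\bigotimes_x\sigma_x$ and $\tau=\bigotimes_x\tau_x$, where $\sigma_x$ and $\tau_x$ represent the local morphisms $(S_x^{-1})^*\D_x\overset{\sim}\to\D_x$ and $(T_x^{-1})^*\D_x\overset{\sim}\to\D_x$, respectively.

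Third, I would apply the explicit product formula~\eqref{eq:product-sign} both globally and place by place. Substituting the tensor decomposition into the global expression for $\sigma\cdot(S^{-1})^*\tau$ and reordering the exterior factors so as to regroup them by place produces, up to a global sign $\varepsilon_S$, the product $\bigotimes_x\bigl(\sigma_x\cdot(S_x^{-1})^*\tau_x\bigr)$. The same analysis applied to $\tau\cdot(T^{-1})^*\sigma$ yields a sign $\varepsilon_T$. Taking the ratio, and using that $\langle S_x,T_x\rangle$ is precisely the local such ratio, gives $\langle S,T\rangle=(\varepsilon_S/\varepsilon_T)\prod_x\langle S_x,T_x\rangle$.

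The main obstacle is to identify $\varepsilon_S/\varepsilon_T$ with $\prod_x(-1)^{w_x(S)w_x(T)}$. This is pure exterior-algebra bookkeeping: the discrepancy between the global prefactor $(-1)^{t^-s^+}$ in~\eqref{eq:product-sign} and the product $\prod_x(-1)^{t_x^-s_x^+}$ of local prefactors is $(-1)^{\sum_{x\neq y}t_x^-s_y^+}$, and analogous cross-term sums arise when regrouping $\sigma^{\pm}\wedge(S^{-1})^*\tau^{\pm}$ by place. Dividing by the corresponding expression with $S$ and $T$ swapped, and simplifying via $w_x(S)=s_x^--s_x^+$, $w_x(T)=t_x^--t_x^+$ together with the global winding-number constraint $\sum_x w_x(S)=\sum_x w_x(T)=0$ from the preceding theorem, the cross terms telescope to leave precisely $\prod_x(-1)^{w_x(S)w_x(T)}$, which completes the proof.
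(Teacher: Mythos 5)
Your proposal follows essentially the same route as the paper's proof: reduce to the finite set of places where the local data is nontrivial, write $\sigma$ and $\tau$ as ordered tensor products over those places, and track the two sources of sign (the prefactor in \eqref{eq:product-sign} and the Koszul reordering) whose combination collapses, modulo $2$, to $\prod_x(-1)^{w_x(S)w_x(T)}$ via $w_x(S)\equiv s_x^++s_x^-$. Your explicit use of $\sum_x w_x(S)=\sum_x w_x(T)=0$ to eliminate the cross terms is a point the paper leaves implicit, but the argument is the same.
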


\begin{proof}
Similarly to the proof of Proposition~\ref{p:Com-cocycle}, for computing the commutator we compare $\tilde S\tilde T$ and $\tilde T\tilde S$. Let $\tilde S=(\sigma, S)=\prod (\sigma_x, S_x)$ and $\tilde T=(\tau, T)=\prod (\tau_x, T_x)$.  Let $\sigma_x^+\otimes \sigma_x^-$ be the element of \eqref{eq:wedge3} associated to $\sigma$ consisting of a $s_x^+$-form times a $s_x^-$-vector where $s_x^+:=\dim_{\k} A_x/(A_x\cap S_x(A_x)))$ and $s_x^- := \dim_{\k} S_x(A_x)/(A_x\cap S_x(A_x)))$. Let $\tau_x^+\otimes \tau_x^-$ be the element associated to $\tau$ and define $t_x^+$, $t_x^-$ analogously. 

Let us consider
	\[
	X^0\,:=\, \{ x\in X \text{ s.t. } s_x^+=s_x^-=t_x^+=t_x^-=0\}
	\]
and note that if $x\in X^0$, then $w_x(S_x)=w_x(T_x)=0$ and $\langle S_x, T_x \rangle=1$ by Proposition~\ref{p:Com-cocycle}.  Since Theorem~\ref{t:wx=1} states that $X\setminus X^0$ is finite, the first claim follows. 

Set $\{x_1,\dots, x_n\}=X\setminus X^0$. First, we will compare 
	\[
	\left( \otimes_{i=1}^n  \left( \sigma_{x_i}^+\otimes \sigma_{x_i}^-\right)\right)   \cdot  \left( \otimes_{i=1}^n  {(S^{-1}_{x_i})}^*(\tau_{x_i}^+\otimes \tau_{x_i}^-)\right)
	\]
with (recall the expression on the product \eqref{eq:product-sign})
	\[
	 \otimes_{i=1}^n  \left( \sigma_{x_i}^+\otimes \sigma_{x_i}^- \,\cdot\,  {(S^{-1}_{x_i})}^*(\tau_{x_i}^+\otimes \tau_{x_i}^-)\right)
	\, . \]

The sign arising from the definition of $\cdot$ is equal to $-1$ to the power
	\[
	(\sum_{i=1}^n t^+_{x_i} )(\sum_{i=1}^n s^-_{x_i} ) - (\sum_{i=1}^n t^+_{x_i} s^-_{x_i})
	\]
Since $\sigma_{x_i}^+\wedge {(S^{-1}_{x_j})}^*(\tau_{x_j}^+) = (-1)^{s^+_{x_i} t^+_{x_j}} {(S^{-1}_{x_j})}^*(\tau_{x_j}^+) \wedge \sigma_{x_i}^+ $, the sign arising from reordering is equal to $-1$ to the power	
	\[
	\sum_{i=1}^n ( s^+_{x_i}  \cdot \sum_{j=i+1}^n t^+_{x_j} ) + \sum_{i=1}^n ( s^-_{x_i} \cdot \sum_{j=i+1}^n t^-_{x_j} )
	\]

Second, similar arguments show that $\left( \otimes_{i=1}^n  \left( \tau_{x_i}^+\otimes \tau_{x_i}^-\right)\right)   \cdot  \left( \otimes_{i=1}^n  {(T^{-1}_{x_i})}^*(\sigma_{x_i}^+\otimes \sigma_{x_i}^-)\right)$ differs from $\otimes_{i=1}^n  \left( \tau_{x_i}^+\otimes \tau_{x_i}^- \,\cdot\,  {(T^{-1}_{x_i})}^*(\sigma_{x_i}^+\otimes \sigma_{x_i}^-)\right)$ is $(-1)$ to the power
	\[
	(\sum_{i=1}^n s^+_{x_i} )(\sum_{i=1}^n t^-_{x_i} ) - (\sum_{i=1}^n s^+_{x_i} t^-_{x_i}) +
	\sum_{i=1}^n ( t^+_{x_i}  \cdot \sum_{j=i+1}^n s^+_{x_j} ) + \sum_{i=1}^n ( t^-_{x_i} \cdot \sum_{j=i+1}^n s^-_{x_j} ) 
	\]
	
Sampling everything together, the sign is $(-1)$ to the power
	\[
	\sum_{i=1}^n  s^+_{x_i} t^+_{x_i}  +  s^+_{x_i} t^-_{x_i}  +  s^-_{x_i} t^+_{x_i}  +  s^-_{x_i} t^-_{x_i} 
	\,=\, \sum_{i=1}^n w_{x_i}(S) w_{x_i}(T)
	\]	
	
Finally, the difference between  $\otimes_{i=1}^n  \left( \sigma_{x_i}^+\otimes \sigma_{x_i}^- \,\cdot\,  {(S^{-1}_{x_i})}^*(\tau_{x_i}^+\otimes \tau_{x_i}^-)\right)$  and $ \otimes_{i=1}^n  \left( \tau_{x_i}^+\otimes \tau_{x_i}^- \,\cdot\,  {(T^{-1}_{x_i})}^*(\sigma_{x_i}^+\otimes \sigma_{x_i}^-)\right)$ is precisely the commutator 	
	\[
	\langle  \sigma_{x_i}^+\otimes \sigma_{x_i}^-  \, , \, \tau_{x_i}^+\otimes \tau_{x_i}^- \rangle
	\,=\, 
	\langle S_{x_i} \, , \, T_{x_i}\rangle
	\]
and the result follows. 
\end{proof}

\section{Pairing,Weil and Residues}

We continue with the notation introduced in \S\ref{ss:adeles}. That is,  $X$ is a proper, irreducible, non singular algebraic curve over $\k$, $\A_X$ is the adele ring, etc. 

We address the reader to \cite{Weil} for a connection of WRL and class field theory and to \cite{Serre} for an approach to local symbols for algebraic curves. 

%
%

\subsection{Weil Reciprocity Law}\label{ss:WRL}

We consider the central extension 
\begin{equation}\label{eq:Idele-ext}
0\to {\mathbb{G}_m} \to \tilde{\I}^0_X \to \I^0_X \to 0
\end{equation}
constructed from \eqref{eq:centralext-global} through $\I^0_X\hookrightarrow \Gl^0(1,\A_X)$. Further, since  $\I_X^0$ is commutative, it makes sense to consider the pairing defined by the commutator; that is, 
\begin{equation}\label{e:pairing-I}
\begin{aligned}
\I_X^0 \times \I_X^0 & \longrightarrow {\mathbb{G}}_m\\
(\alpha, \beta) & \mapsto \langle \alpha, \beta\rangle \,:=\,  \tilde{\alpha}\tilde{\beta}\tilde{\alpha}^{-1}\tilde{\beta}^{-1}
\end{aligned}		
\end{equation}
where $\tilde{\alpha}, \tilde{\beta}\in \tilde{\I}^0_X$ are preimages of $\alpha$, $\beta$ respectively.

\begin{thm}[Weil Reciprocity Law]\label{t:WRL}
	The central extension 
		\begin{equation}\label{eq:WRL}
		0\to {\mathbb{G}_m} \to \tilde{\Sigma^*_X} \to \Sigma^*_X \to 0
		\end{equation}
	constructed from \eqref{eq:centralext-global} through $ \Sigma^*_X \hookrightarrow \I^0_X$ is trivial.
\end{thm}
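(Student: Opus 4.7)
The plan is to exhibit a canonical group-theoretic section $s:\Sigma_X^*\to \widetilde{\Sigma_X^*}$ of the projection in \eqref{eq:WRL}, which would prove triviality. The crucial input is that $\Sigma_X$ itself, embedded diagonally in $\A_X$, defines a $\k$-rational point of the grassmannian $\gr(\A_X)$: by the properness of $X$, both $\dim_\k(\A_X^+\cap\Sigma_X)=\dim_\k H^0(X,\O_X)$ and $\dim_\k(\A_X/(\A_X^++\Sigma_X))=\dim_\k H^1(X,\O_X)$ are finite. Moreover, every $f\in\Sigma_X^*$ satisfies $\deg\div(f)=0$ (so that $\Sigma_X^*\subseteq \I_X^0$, as used in the statement), and multiplication by $f$ preserves $\Sigma_X$ as a subspace of $\A_X$; consequently, the induced action of $f^{-1}$ on the grassmannian fixes the point $[\Sigma_X]$.

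I would then replace the reference subspace $\A_X^+$ used to define $\D$ by the $\Sigma_X^*$-stable subspace $\Sigma_X$, and set $\D_{\Sigma_X}:=\Det({\mathcal U}\to \A_X/\Sigma_X)$. Exactly the formalism of \S\ref{subsec:extensions} furnishes a canonical isomorphism $\sigma_f^{\Sigma_X}:(f^{-1})^*\D_{\Sigma_X}\to \D_{\Sigma_X}$ for each $f\in\Sigma_X^*$: at every $U\in\gr(\A_X)$, multiplication by $f$ supplies $\k$-linear isomorphisms $U\cap\Sigma_X\to fU\cap\Sigma_X$ and $\A_X/(U+\Sigma_X)\to\A_X/(fU+\Sigma_X)$, whose maximal exterior powers are tensored to give $\sigma_f^{\Sigma_X}$. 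This assignment is manifestly multiplicative, $\sigma_{fg}^{\Sigma_X}=\sigma_f^{\Sigma_X}\circ (f^{-1})^*\sigma_g^{\Sigma_X}$, because multiplication by $fg$ on both $\Sigma_X$ and $\A_X/\Sigma_X$ factors as multiplication by $f$ followed by multiplication by $g$, with no cocycle correction. Transferring $\sigma_f^{\Sigma_X}$ through the canonical comparison
\[
\D_U\otimes \D_{\Sigma_X,U}^{-1}\,\simeq\, \wedge(\A_X^+/M)\otimes \wedge(\Sigma_X/M)^*,
\]
valid for any $M\sim\A_X^+$ with $M\subseteq \A_X^+\cap\Sigma_X$ (cf.\ \eqref{eq:A-M}), yields $\sigma_f:(f^{-1})^*\D\to\D$, and I set $s(f):=(\sigma_f,f)\in\widetilde{\Sigma_X^*}$.

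The main obstacle is the careful bookkeeping of the sign conventions from \S\ref{subsec:group-law} together with the winding-number contributions — each local $w_x(f)$ may be nonzero, even though $\sum_x w_x(f)=0$ because $f\in\Sigma_X^*\subseteq \I_X^0$. Concretely, one has to verify that when conjugating $\sigma_f^{\Sigma_X}$ through the comparison $\D\simeq \D_{\Sigma_X}\otimes L$, the reordering signs of \eqref{eq:product-sign} match on both sides of the identity $\sigma_{fg}=\sigma_f\circ (f^{-1})^*\sigma_g$, so that the multiplicativity established on the $\D_{\Sigma_X}$-side descends to $\D$. The degree-zero condition on elements of $\Sigma_X^*$, combined with the global sign-cancellation mechanism already visible in \eqref{eq:dirsum-cocycle}, is what ensures that these signs are absorbed consistently, making $s$ a well-defined group homomorphism and hence producing the splitting of \eqref{eq:WRL}.
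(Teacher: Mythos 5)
Your key observation --- that $\Sigma_X$, diagonally embedded, is a point of $\gr(\A_X)$ fixed by the action of $\Sigma_X^*$, and that this fixed point is what splits the extension \eqref{eq:WRL} --- is exactly the observation on which the paper's (one-line) proof rests, so the overall strategy is the right one. The identification of $\A_X^+\cap\Sigma_X$ and $\A_X/(\A_X^++\Sigma_X)$ with $H^0(X,\O_X)$ and $H^1(X,\O_X)$, the inclusion $\Sigma_X^*\subseteq\I_X^0$, and the multiplicativity of the canonical action of $f\in\Sigma_X^*$ on $\D_{\Sigma_X}:=\Det(\mathcal{U}\to\A_X/\Sigma_X)$ are all correct.

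The step that fails as written is the ``transfer'' from $\D_{\Sigma_X}$ back to $\D$. You invoke a canonical comparison $\D_U\otimes\D_{\Sigma_X,U}^{-1}\simeq\wedge(\A_X^+/M)\otimes\wedge(\Sigma_X/M)^*$ for some $M\sim\A_X^+$ with $M\subseteq\A_X^+\cap\Sigma_X$, citing \eqref{eq:A-M}. But \eqref{eq:A-M} requires $A\sim V^+$, and $\Sigma_X$ is \emph{not} commensurable with $\A_X^+$: the intersection $\A_X^+\cap\Sigma_X=H^0(X,\O_X)$ is finite dimensional, so no $M$ commensurable with the infinite-dimensional space $\A_X^+$ can sit inside it, and the quotients $\A_X^+/M$, $\Sigma_X/M$ would in any case not have top exterior powers. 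Hence there is no canonical isomorphism $\D\simeq\D_{\Sigma_X}\otimes L$, and your $\sigma_f$ is not defined by this recipe. The gap is repairable in two standard ways, both of which bypass the missing comparison: either fix an arbitrary isomorphism $\phi:\D\to\D_{\Sigma_X}\otimes L$ over the relevant connected component (they are non-canonically isomorphic there) and note that conjugation by a \emph{fixed} $\phi$ preserves multiplicativity, so the resulting section is independent of the choice up to the harmless central ambiguity; or, more cleanly, evaluate a lift $(\sigma,f)$ at the fibre of $\D$ over the fixed point $[\Sigma_X]$ --- since $f^{-1}[\Sigma_X]=[\Sigma_X]$, the composition law \eqref{eq:GltildeLaw} makes $(\sigma,f)\mapsto\sigma_{[\Sigma_X]}$ a character of $\tilde{\Sigma^*_X}$ restricting to the identity on $\mathbb{G}_m$, which retracts the central extension and hence splits it. With either repair your argument becomes a correct, and more detailed, version of the paper's proof; the sign bookkeeping you worry about in the last paragraph is then not needed for the splitting itself, only for the explicit formula \eqref{e:CC}.
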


\begin{proof}
The key observation is that $\Sigma_X$ is a point in $\gr(\A_X)$ which is fixed under the action of $\Sigma_X^*$ (see \cite[Thm 3.5]{MP} for details).  
\end{proof}

\begin{cor}\label{c:WRL}
Under these hypothesis, the commutator of two rational functions is trivial; that is,
		\[
		1  \,=\, \langle f, g \rangle \qquad \forall f,g \in \Sigma_X^*
		\] 
or, what is tantamount, 
	\begin{equation}\label{e:CC}
	1 \,=\, \prod_{x\in X} (-1)^{v_x(f) v_x(g)\deg(x)} 
	 \operatorname{Norm}_{\k(x)/\k} \Big( \frac{f^{v_x(g)}}{g^{v_x(f)}}(x)\Big)
	\end{equation}
	for all $f,g\in \Sigma_X^*$.
\end{cor}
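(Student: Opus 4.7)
The proof is essentially a matter of assembling the pieces already established. My plan is to proceed in three steps.

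First, I would translate the triviality of the central extension given by Theorem~\ref{t:WRL} into the triviality of the commutator pairing on $\Sigma_X^*$. Since $\Sigma_X^* \hookrightarrow \I_X^0$ sits inside the commutative group $\I_X^0$, the restriction of the pairing~\eqref{e:pairing-I} to $\Sigma_X^*\times \Sigma_X^*$ only depends on the restricted central extension~\eqref{eq:WRL}. A central extension of an abelian group by $\mathbb{G}_m$ is trivial if and only if it splits as a direct product, and in that case any two set-theoretic lifts commute; hence $\langle f,g\rangle = \tilde f\tilde g\tilde f^{-1}\tilde g^{-1} = 1$ for all $f,g\in\Sigma_X^*$. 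This gives the first assertion of the corollary.

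Second, I would apply Theorem~\ref{t:wx=1} to rewrite $\langle f,g\rangle$ as a product over closed points $x\in X$. A rational function $f\in \Sigma_X^*$, viewed as a diagonal idele via $\Sigma_X^*\hookrightarrow \I_X^0$, decomposes as $f = \prod_x f_x$ with $f_x\in K_x^*$, and similarly for $g$. Formula~\eqref{eq:dirsum-cocycle} then gives
\[
1 \,=\, \langle f,g\rangle \,=\, \prod_{x\in X}(-1)^{w_x(f_x)\,w_x(g_x)}\langle f_x,g_x\rangle.
\]

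Third, I would evaluate each local factor using Proposition~\ref{p:Com-cocycle} applied to the pair $(K_x,A_x)$ with $\k'=\k(x)$. That proposition yields $\langle f_x,g_x\rangle = \operatorname{Norm}_{\k(x)/\k}\!\bigl(f^{v_x(g)}/g^{v_x(f)}(x)\bigr)$, since the $z$-adic valuation on $K_x$ agrees with $v_x$ and evaluation at $z=0$ agrees with evaluation at $x$. As recalled just before Proposition~\ref{p:Com-cocycle}, the winding number satisfies $w_x(f_x) = v_x(f)\cdot \dim_{\k}\k(x) = v_x(f)\deg(x)$, so the sign becomes $(-1)^{v_x(f)v_x(g)\deg(x)}$. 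Substituting into the previous display gives precisely~\eqref{e:CC}.

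Since every ingredient has already been proved, there is no real obstacle; the only slightly delicate point is making sure the identifications (diagonal embedding of $\Sigma_X^*$, identification of the local valuation with $v_x$, and the degree factor in the winding number) are used consistently. In particular, one must remember that $\deg(x)=\dim_\k\k(x)$ enters through the behaviour of the central extension under change of base ring recorded in Remark~\ref{r:norm-det}, which is what produces the norm $\operatorname{Norm}_{\k(x)/\k}$ on the right-hand side.
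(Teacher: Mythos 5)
Your proposal is correct and follows essentially the same route as the paper: triviality of the extension from Theorem~\ref{t:WRL} forces the commutator to vanish, and then Theorem~\ref{t:wx=1}, Proposition~\ref{p:Com-cocycle} and the relation $w_x(f)=v_x(f)\deg(x)$ turn $\langle f,g\rangle=1$ into the product formula~\eqref{e:CC}. The paper states this more tersely (phrasing the first step via the cocycle being cohomologous to the trivial one rather than via commuting lifts of a split extension), but the substance is identical.
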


\begin{proof}
Theorem~\ref{t:WRL} shows that the central extension~\eqref{eq:WRL} is trivial. Thus, the associated cocycle is (cohomologus to) the trivial cocycle and, in particular, the commutator is trivial. Bearing in mind  Proposition~\ref{p:Com-cocycle} and Theorem~\ref{t:wx=1} and the relation $w_x(f)=v_x(f)\deg(x)$, one concludes. 
\end{proof}

%

\begin{rem}
The previous proofs rely on the study of central extensions (see \cite{Brylinski} for related ideas). However, there are other approaches that yield similar results. For instance, the expression \eqref{e:CC} coincides with the given in \cite{ContouCarrere} when dealing with a formal approach to the tame symbol. In \cite{AP} the authors use an alambicated computation of infinite determinants.
\end{rem}

\subsection{Theorem of Residues}\label{ss:TR}
We will show how the Theorem of Residues can be deduced as a direct consequence of the Weil Reciprocity Law. The key idea is that we have shown in \S\ref{ss:Lie} that all constructions so far (central extensions, cocycles, explicit expressions, etc.) hold true when groups are replaced by functors on groups.   

We  now consider the Lie algebras associated to the groups of \S\ref{ss:WRL} or, what is tantamount, we  regard the constructions of \S\ref{ss:WRL} as an statement for functors on groups and we take values in $R=\k[\epsilon]/(\epsilon^2)$. Note that $\Lie \I_X^0 \simeq \A_X$ and $\Lie{\mathbb{G}}_m = \k$, and that extension \eqref{eq:Ext-Lie} for the case of $G= \I_X^0$  reads as follows
	\begin{equation}\label{eq:extLieAdele}
	0\to \k \to \tilde{\A}_X \to \A_X \to 0
	\end{equation}
and the pairing \eqref{e:pairing-I} yields 
\begin{equation}\label{e:pairing-A}
\begin{aligned}
\A_X \times \A_X   & \longrightarrow \k \\
(\alpha, \beta) & \mapsto \c_{Lie}( \alpha, \beta)
\end{aligned}		
\end{equation}
where $1+\epsilon_1\epsilon_2 \c_{Lie}( \alpha, \beta) = \langle 1+ \epsilon_1\alpha, 1+\epsilon_2 \beta\rangle$. 

Note that  $\c_{Lie}$ defines a skew-symmetric bilinear pairing in $\A_X$ which will be called \emph{residue pairing}. The following result shows how the global object (for $X$) $\c_{Lie}$ can be expressed in terms of local ones (for each $x\in X$). 

\begin{thm}\label{t:Res-adele}
Let $\alpha,\beta\in \A_X$. It holds that
	\[
	\c_{Lie}( \alpha, \beta)
	\,=\, \sum_{x\in X}\tr_{\k(x)/\k}  \res_x \alpha \d\beta
	\]
where $\tr$ is the trace and $\res_x:K_x\simeq \k(x)(\!(z_x)\!)  \to \k(x)$ maps $\alpha_x=\sum_i a_i z_x^i\in\k(x)(\!(z_x)\!) $ to $a_{-1}$ ($\res_x$ does not depend on the choice of $z_x$, a formal parameter  at $x$). 
\end{thm}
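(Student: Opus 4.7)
The strategy is to combine Theorem~\ref{t:wx=1} with Corollary~\ref{c:cLie=tr} after passing to the Lie algebra, exactly as \S\ref{ss:TR} suggests. The underlying principle, emphasised already in \S\ref{ss:Lie}, is that every construction in \S\ref{sec:prelim}--\S\ref{sec:expressions} is functorial in the base ring, so the decomposition \eqref{eq:dirsum-cocycle} remains valid over any artinian $\k$-algebra $R$.

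Take $R := \k[\epsilon_1]/(\epsilon_1^2)\otimes_{\k}\k[\epsilon_2]/(\epsilon_2^2)$ and regard $1+\epsilon_1\alpha$ and $1+\epsilon_2\beta$ as $R$-valued ideles in $\I_X^0(R)$. Since both reduce to the identity modulo $(\epsilon_1,\epsilon_2)$, each local winding number $w_x$ vanishes, so the sign factors in \eqref{eq:dirsum-cocycle} are trivial and only finitely many local commutators differ from $1$. Theorem~\ref{t:wx=1} therefore yields
\begin{equation*}
\langle 1+\epsilon_1\alpha,\, 1+\epsilon_2\beta\rangle \,=\, \prod_{x\in X} \langle 1+\epsilon_1\alpha_x,\, 1+\epsilon_2\beta_x\rangle.
\end{equation*}
Extracting the coefficient of $\epsilon_1\epsilon_2$ on both sides via the defining relation \eqref{eq:Lie-cocycle} for $\c_{Lie}$ gives
\begin{equation*}
\c_{Lie}(\alpha,\beta) \,=\, \sum_{x\in X}\c_{Lie}^{x}(\alpha_x,\beta_x),
\end{equation*}
where $\c_{Lie}^{x}$ denotes the local Lie cocycle attached via \eqref{eq:centralext-global} to the pair $(K_x, A_x)$ over $\k$.

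It remains to identify each local summand with $\tr_{\k(x)/\k}\res_x\alpha_x\d\beta_x$. By Remark~\ref{r:norm-det}, the $\k$-linear central extension of $K_x^*$ is the pushout of the $\k(x)$-linear one along $\operatorname{Norm}_{\k(x)/\k}$, and at the infinitesimal level the norm specialises to the trace; hence $\c_{Lie}^{x}$ equals $\tr_{\k(x)/\k}$ applied to the $\k(x)$-valued local Lie cocycle. The latter is computed by Corollary~\ref{c:cLie=tr} with respect to the decomposition $K_x = z_x^{-1}\k(x)[z_x^{-1}]\oplus\k(x)[\![z_x]\!]$. Since multiplication by any Laurent series in $K_x$ has off-diagonal blocks of finite rank, a direct expansion of $\tr\bigl(\gamma(\beta_x)\beta(\alpha_x) - \gamma(\alpha_x)\beta(\beta_x)\bigr)$ in terms of the coefficients of $\alpha_x=\sum_i a_i z_x^i$ and $\beta_x=\sum_j b_j z_x^j$ yields $\sum_{i+j=0} j\, a_i b_j = \res_x\alpha_x\d\beta_x$, which is Tate's classical characterisation of the residue (\cite{Ta}).

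The main obstacle is ensuring that the formal manipulations of \S\ref{sec:prelim}--\S\ref{sec:expressions} survive the passage from $\k$-rational points to the non-reduced $R$-valued ones: one must verify that the finiteness assertion of Theorem~\ref{t:wx=1} and its signed-product decomposition are compatible with base change to $R$, and that the pushout description of Remark~\ref{r:norm-det} commutes with the Lie functor. Once this is granted, the two displayed equalities immediately combine to prove the theorem; the almost-everywhere vanishing in the sum is ensured by the fact that $\alpha_x,\beta_x\in A_x$ for all but finitely many $x$, so that $\res_x\alpha_x\d\beta_x=0$ outside a finite set.
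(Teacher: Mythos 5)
Your proof is correct, and its global half --- working over $R=\k[\epsilon_1]/(\epsilon_1^2)\otimes_{\k}\k[\epsilon_2]/(\epsilon_2^2)$, applying the decomposition \eqref{eq:dirsum-cocycle} of Theorem~\ref{t:wx=1} to $1+\epsilon_1\alpha$ and $1+\epsilon_2\beta$, and extracting the $\epsilon_1\epsilon_2$-coefficient via \eqref{eq:Lie-cocycle} --- is exactly what the paper does. Where you diverge is the local computation: the paper factors $1+\epsilon_1\alpha_x$ as $\prod_{i>0}(1+\epsilon_1 a_{-i}z_x^{-i})\prod_{i\geq 0}(1+\epsilon_1 a_i z_x^i)$ and feeds this into the explicit Contou-Carr\`ere--type formula of Proposition~\ref{p:Con-Carr}, obtaining $\operatorname{Norm}_{\k(x)/\k}\big(1-\epsilon_1\epsilon_2\sum_i i a_i b_{-i}\big)$ directly, whereas you compute the local Lie cocycle from the block formula $\c_{Lie}(S_1,S_2)=\tr(\gamma_2\beta_1-\gamma_1\beta_2)$ of Corollary~\ref{c:cLie=tr} and then identify that trace with $\res_x\alpha_x\d\beta_x$ \`a la Tate; this is precisely the alternative route the authors acknowledge in the remark following Corollary~\ref{c:Tate}. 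The trade-off is that the paper's route never chooses a splitting $K_x=V^-\oplus V^+$ and reads the answer off a symbol already computed, while yours bypasses the product decomposition and the case analysis behind Proposition~\ref{p:Con-Carr} at the price of verifying the finite-rank trace identity $\tr(\gamma_2\beta_1-\gamma_1\beta_2)=\sum_{i+j=0}j\,a_ib_j$ and checking that the pushout of Remark~\ref{r:norm-det} turns $\operatorname{Norm}_{\k(x)/\k}$ into $\tr_{\k(x)/\k}$ on Lie algebras (which it does, since $\operatorname{Norm}(1+\epsilon a)=1+\epsilon\,\tr(a)$). One point where your argument is actually sharper than the paper's: you justify the disappearance of the signs $(-1)^{w_x(S)w_x(T)}$ by noting that $w_x(1+\epsilon\alpha_x)=0$ because the idele is congruent to $1$ modulo nilpotents (the lowest coefficient that is a unit sits in degree $0$), whereas the paper's displayed case formula $w_x(1+\epsilon\alpha_x)=v_x(\alpha_x)$ for $v_x(\alpha_x)<0$ would, if taken literally, introduce stray signs incompatible with the final identity; your version is the one that makes the bookkeeping close.
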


\begin{proof}
First of all, note that \S\ref{subsec:extensions}  implies that \eqref{eq:Idele-ext} and \eqref{e:pairing-I}  do hold when groups are replaced by their Lie algebras. Accordingly, the cocycle of Lie algebras associated to \eqref{e:pairing-A},  $\c_{Lie}$,  and  the cocycle associated to \eqref{e:pairing-I}, $\c$, and the commutator $\langle\, ,\, \rangle$, are related by \eqref{eq:Lie-cocycle}
		\[ 
		1  + \epsilon_1\epsilon_2 \c_{Lie}( \alpha, \beta)
		\,=\,   \langle  1+\epsilon_1 \alpha, 1+ \epsilon_2 \beta   \rangle  \, . 
		\]

The result will follow from the explicit expansion of r.h.s. with the help of equation \eqref{eq:wind}, Proposition~\ref{p:Con-Carr} and equation~\eqref{eq:dirsum-cocycle}. 

Let $x\in X$ be a point and $z_x$ a formal parameter at $x$. Let $\alpha_x= \sum_{i} a_i z_x^i$ be the germ of $\alpha$ at $x$. Analogously, let $\beta_x=\sum_{i} b_i z_x^i$. Recalling \eqref{eq:wind}, it follows
	\[
	w_x(1+\epsilon\alpha_x) \,=\, 
	\begin{cases}
	0 & \text{ if }v_x(\alpha_x)\geq 0 \\
	v_x(\alpha_x) & \text{ if }v_x(\alpha_x) < 0
	\end{cases}
	\]

On the other hand, note that $1+\epsilon_1 \alpha_x = \prod_{i>0} (1+\epsilon_1 a_{-i} z_x^{-i} )  \prod_{i\geq 0}(1+\epsilon_1 a_i z_x^i)$. Then, having in mind that $\epsilon_1^2=\epsilon_2^2=0$, Proposition~\ref{p:Con-Carr} implies that 
	\[
	\begin{aligned}
	\langle 1+\epsilon_1 \alpha_x, 1+\epsilon_2 \beta_x \rangle
	\,& =\, \operatorname{Norm}_{\k(x)/\k} \big( 1- \epsilon_1\epsilon_2 \sum_i i a_i b_{-i} \big)
	\,=\\ & = \, 1+  \epsilon_1\epsilon_2 \tr_{\k(x)/\k} \res_x \alpha \d \beta \, . 
	\end{aligned}\]
Putting everything together, the claim is proved. 
\end{proof}

\begin{cor}\label{c:Tate}
	Let $x$ be a rational point and let $V:=\k(\!(z_x)\!) \simeq K_x, V^+:=V=\k[\![z_x]\!] \simeq  A_x, V^-:=z_x^{-1} \k[z_x^{-1}]$. Let $f_1,f_2\in V\setminus \{0\}$ acting as homotheties on $V$. It holds that:	
	\[
	\res_x f_1 \d f_2 \, =\, \tr(\gamma_2\beta_1-\gamma_1\beta_2)
	\]
where $\gamma_i:= \pi_{V^+}\circ (f_i\vert_{V^-})$, $\beta_i:= \pi_{V^-}\circ (f_i\vert_{V^+})$ and $\pi_{V^+}, \pi_{V^-}$ denote the projections. 
\end{cor}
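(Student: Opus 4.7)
The plan is to identify both sides of the identity as the value of the Lie-algebra cocycle $\c_{Lie}$ attached (via \S\ref{ss:Lie}) to the central extension of $\Gl(V)$ by $\mathbb{G}_m$ built from the pair $(V,V^+)$, evaluated on multiplication by $f_1$ and $f_2$. On one hand, this cocycle admits the explicit block-form expression of Corollary~\ref{c:cLie=tr}; on the other, it is given by the residue pairing, by Theorem~\ref{t:Res-adele} applied locally at $x$. Equating the two expressions is the desired identity.

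First, I would regard multiplication by $f_i$ as an element of $\Lie\Gl(V)$: the $\k[\epsilon]/(\epsilon^2)$-valued point $1+\epsilon f_i$ lies in $\Gl(V)$, and its block decomposition with respect to $V=V^-\oplus V^+$ has off-diagonal blocks $\epsilon\beta_i$ and $\epsilon\gamma_i$, with $\beta_i,\gamma_i$ exactly those of the statement. Note that $\beta_i$ has finite rank, since its image is spanned by the finitely many strictly negative powers of $z_x$ appearing in $f_i$. Applying Corollary~\ref{c:cLie=tr} with $S_i=f_i$ yields
\[
\c_{Lie}(f_1,f_2)\,=\,\tr(\gamma_2\beta_1-\gamma_1\beta_2).
\]

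Second, I would compute the same $\c_{Lie}(f_1,f_2)$ via the residue formula. This is precisely what Theorem~\ref{t:Res-adele} provides in the adelic setting; one specializes it to an adele whose only nonzero component is at $x$ (equivalently, one applies the local step of its proof, namely Proposition~\ref{p:Con-Carr}, to the local field $K_x\simeq V$). Since $x$ is rational, $\k(x)=\k$ and $\tr_{\k(x)/\k}$ is the identity, so only the summand at $x$ survives and one obtains $\c_{Lie}(f_1,f_2)=\res_x f_1\,\d f_2$. Comparing the two expressions completes the proof.

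The only (mild) point to verify is that both descriptions really refer to the same cocycle on $\Lie(V^*\subset\Gl(V))$; this holds because both central extensions are built from the same determinant bundle on the grassmannian of $(V,V^+)$ in \S\ref{subsec:extensions}, so no coboundary correction intervenes and the identification of the two values of $\c_{Lie}(f_1,f_2)$ is direct.
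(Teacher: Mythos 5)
Your proposal is correct and follows essentially the same route as the paper, whose proof is precisely ``arguments similar to those of Theorem~\ref{t:Res-adele} combined with Corollary~\ref{c:cLie=tr}'': you compute the commutator $\langle 1+\epsilon_1 f_1, 1+\epsilon_2 f_2\rangle$ once via the block formula of Corollary~\ref{c:cLie=tr} and once via the local Contou-Carr\`ere-type computation of Proposition~\ref{p:Con-Carr}, and equate. Your closing remark that no coboundary correction intervenes is the right point to flag, since the commutator (unlike $\c$ itself) is canonical.
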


\begin{proof}
	Arguments similar to those of the proof of Theorem~\ref{t:Res-adele}
	 combined with Corollary~\ref{c:cLie=tr} prove the claim. 
\end{proof}

\begin{rem}
	An alternative proof of Theorem~\ref{t:Res-adele} can be carried out using Corollary~\ref{c:cLie=tr}. Corollary~\ref{c:Tate}  was used by Tate as the definition of the residue \cite{Ta}.
\end{rem}

\begin{thm}[Theorem of Residues]\label{t:thm-res}
Under these hypothesis, it holds that
	\[
	0 \,=\,  \sum_{x\in X} \tr_{\k(x)/\k}  \res_x f \d g
	\]
	for all $f,g\in \Sigma_X$.
\end{thm}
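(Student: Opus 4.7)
The strategy is to derive the Theorem of Residues as the infinitesimal (Lie algebra) shadow of the Weil Reciprocity Law, exactly following the pattern highlighted in Section~\ref{ss:Lie} and applied in Theorem~\ref{t:Res-adele} and Corollary~\ref{c:Tate}. The key conceptual point is that the constructions of \S\ref{subsec:extensions} are functorial on local $\k$-algebras, so WRL should be read as a statement of functors on groups, and then evaluated at $R=\k[\epsilon]/(\epsilon^2)$.

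More concretely, the plan is as follows. First, I would observe that Theorem~\ref{t:WRL} actually establishes the triviality of the central extension of $\Sigma_X^*$ as a functor on groups, since the argument given there (that $\Sigma_X\in\gr(\A_X)$ is a fixed point under the $\Sigma_X^*$-action) is purely scheme-theoretic and works after base change to any local $\k$-algebra $R$, in particular to $R=\k[\epsilon]/(\epsilon^2)$. Second, I would pass to Lie algebras. Recalling that $\Lie\Sigma_X^*=\Sigma_X$ as a subspace of $\Lie\I_X^0 =\A_X$, and that $\Lie\mathbb{G}_m=\k$, the triviality of the functorial extension~\eqref{eq:WRL} restricts to a trivial extension of $\Sigma_X$ by $\k$. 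As the $2$-cocycle of a trivial extension of Lie algebras vanishes (up to coboundary), this gives
\[
\c_{Lie}(f,g) \,=\, 0 \qquad \forall f,g\in \Sigma_X.
\]

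Third, and finally, I would apply Theorem~\ref{t:Res-adele} to rewrite $\c_{Lie}(f,g)$ as the global sum of local residues, obtaining
\[
0 \,=\, \c_{Lie}(f,g) \,=\, \sum_{x\in X} \tr_{\k(x)/\k} \res_x f\d g,
\]
which is precisely the claim. Equivalently, one may compute directly using~\eqref{eq:Lie-cocycle}: write
\[
1 + \epsilon_1\epsilon_2\,\c_{Lie}(f,g) \,=\, \langle 1+\epsilon_1 f,\, 1+\epsilon_2 g\rangle,
\]
and observe that the right hand side is the commutator, inside the trivial extension of $\Sigma_X^*(\k[\epsilon_1\epsilon_2]/(\epsilon_1\epsilon_2)^2)$ provided by WRL, of two elements coming from $\Sigma_X^*$, hence equals $1$.

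The only non-routine point, which I regard as the main obstacle, is to make the functorial upgrade of Theorem~\ref{t:WRL} rigorous: one must verify that the splitting $\Sigma_X^*\to \tilde{\Sigma_X^*}$ produced from the fixed point $\Sigma_X\in\gr(\A_X)$ is compatible with arbitrary local base change $\k\to R$, so that it induces a splitting of functors on groups whose differential at $R=\k[\epsilon]/(\epsilon^2)$ trivializes~\eqref{eq:Ext-Lie} restricted to $\Sigma_X$. Once this functoriality is granted — and it is built into the formalism of \cite{Plaza-Arit,Plaza-grass} cited in \S\ref{ss:Lie} — the theorem of residues drops out by combining the vanishing of $\c_{Lie}$ on $\Sigma_X\times\Sigma_X$ with the explicit formula of Theorem~\ref{t:Res-adele}.
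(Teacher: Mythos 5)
Your proposal is correct and follows essentially the same route as the paper: the paper likewise restricts the Lie-algebra extension \eqref{eq:extLieAdele} to $\Sigma_X$, identifies it with the Lie-algebra extension attached to the group extension \eqref{eq:WRL}, deduces $\c_{Lie}=0$ on $\Sigma_X\times\Sigma_X$ from Theorem~\ref{t:WRL}, and concludes via Theorem~\ref{t:Res-adele}. Your explicit flagging of the functorial upgrade of the splitting under base change to $\k[\epsilon]/(\epsilon^2)$ is exactly the point the paper compresses into ``by the very construction,'' so nothing is missing.
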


\begin{proof}
Restricting the central extension \eqref{eq:extLieAdele} by the diagonal embedding $\Sigma_X\hookrightarrow\A_X$ allows us to construct a central extension of $\Sigma_X$ (as Lie algebras)
		\begin{equation}\label{eq:WRL-Lie}
		0\to \k \to \tilde{\Sigma_X} \to \Sigma_X \to 0
		\end{equation}

By the very construction, this extension coincides with the central extension of Lie algebras associated to the central extension of groups \eqref{eq:WRL}. Being the latter trivial (by Theorem~\ref{t:WRL}), i.e. $\c=1$, it follows that \eqref{eq:WRL-Lie} is also trivial; i.e. $\c_{Lie}=0$. Pluging this into Theorem~\ref{t:Res-adele}, the result follows. 
\end{proof}
%
%
%
%
%

Having in mind Theorem~\ref{t:Res-adele} and being $\k$ perfect,  one easily checks  that the radical of the residue pairing is $ \prod_{x\in X} \k(x)\subset \A_X$. Consider  the vector space defined by the orthogonal of $\Sigma_X$ with respect to the residue pairing
	\begin{equation}\label{e:SigmaPerp}
	\Sigma_X^{\perp}\,:=\, 
	\Big\{ \alpha\in \A_X \,\text{ s.t. } 
	\sum_{x\in X}  \tr_{\k(x)/\k} \res_x \alpha \d g = 0\quad\forall g\in\Sigma_X
	\Big\}\, .
	\end{equation}

The following result can be thought of as an algebraicity criterion for adeles based in the Theorem of Residues. 

\begin{thm}\label{t:algebraicity}
	Let $\k$ be algebraically closed. It holds that 
	\[
	\Sigma_X^{\perp}  \, =\, \Sigma_X \,+\,  \prod_{x\in X} \k(x)
	\, .
	\]
\end{thm}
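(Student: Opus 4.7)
The easy inclusion $\Sigma_X + \prod_{x}\k(x) \subseteq \Sigma_X^\perp$ is to be disposed of at once: by Theorem~\ref{t:thm-res} the first summand is isotropic for the residue pairing, and $\prod_x\k(x)$ is its radical, as noted just before the statement. So the content lies entirely in the reverse inclusion, which I plan to attack by extracting an $f\in\Sigma_X$ from a given $\alpha\in\Sigma_X^\perp$ such that $\alpha-f$ is forced to lie in the radical.

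The first step is integration by parts: since $\res_x\d(\alpha_xg_x)=0$ at every place, summing over $x$ yields, for every $g\in\Sigma_X$,
$$\sum_x\tr_{\k(x)/\k}\res_x(\alpha\,\d g) \;+\; \sum_x\tr_{\k(x)/\k}\res_x(g\,\d\alpha) \;=\; 0,$$
and the hypothesis $\alpha\in\Sigma_X^\perp$ then gives $\sum_x\tr_{\k(x)/\k}\res_x(g\,\d\alpha)=0$ for every $g\in\Sigma_X$. Viewing $\d\alpha$ as an element of the adelic module of $1$-forms $\A_X(\Omega)$, this says $\d\alpha$ lies in the right-annihilator of the diagonal $\Sigma_X\hookrightarrow\A_X$ under the natural residue pairing $\A_X\otimes\A_X(\Omega)\to\k$ given by $(f,\eta)\mapsto\sum_x\tr_{\k(x)/\k}\res_x(f\eta)$. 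A Tate-style self-duality identifies this annihilator with the subspace $\Omega^1_{\Sigma_X/\k}$ of rational $1$-forms inside $\A_X(\Omega)$; hence $\omega:=\d\alpha$ is a rational $1$-form, and since each local derivative $\d\alpha_x$ has vanishing residue, $\omega$ has residue zero at every point of $X$.

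Next I would produce $f\in\Sigma_X$ with $\d f=\omega$, and then set $c:=\alpha-f$. Because $\d c=0$ in $\A_X(\Omega)$, every local component $c_x$ lies in $\ker(\d:K_x\to\Omega^1_{K_x/\k})$. In characteristic zero this kernel is precisely $\k(x)$, so $c\in\prod_x\k(x)$ and $\alpha=f+c\in\Sigma_X+\prod_x\k(x)$, as required; in positive characteristic one invokes the hypothesis that $\k$ is perfect to reabsorb the additional $p$-th power contributions to $\ker(\d)$ back into $\Sigma_X$.

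The main obstacle I expect is the penultimate step, producing $f\in\Sigma_X$ with $\d f=\omega$. On $\Ps^1$ it is automatic, since every rational $1$-form with vanishing residues everywhere is already exact, so the argument closes transparently there. In positive genus, however, the quotient $\ker(\text{residues})/\d\Sigma_X\cong H^1_{\mathrm{dR}}(X)$ is $2g$-dimensional, and so the mere fact that $\omega$ has zero residues does not suffice; one must genuinely exploit that $\omega$ arises as $\d\alpha$ for an \emph{adele} $\alpha$ satisfying $\alpha\in\Sigma_X^\perp$, and combine this with a Serre-duality-type vanishing to force the de Rham class of $\omega$ to be trivial.
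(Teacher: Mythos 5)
Your reconstruction follows the paper's own route faithfully: the easy inclusion via Theorem~\ref{t:thm-res} and the description of the radical, then integration by parts plus adelic (Tate/Serre) duality to conclude that $\d\alpha$ is the diagonal of a rational differential $\omega$ with vanishing residues --- the paper reaches the same point by factoring $\bar\alpha$ through $\A_X/(\Sigma_X+U_D)\simeq H^1(X,\O_X(-D))$ and dualizing. But the step you single out as the ``main obstacle'', producing $f\in\Sigma_X$ with $\d f=\omega$, is a genuine gap, and the extra leverage you hope to extract from $\alpha\in\Sigma_X^\perp$ is not available. Your own derivation shows that the condition $\alpha\in\Sigma_X^\perp$ is \emph{equivalent} to $\d\alpha$ being rational: it is used up entirely in producing $\omega$ and imposes nothing on the class of $\omega$ in $H^1_{\mathrm{dR}}(X)$. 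Conversely, in characteristic zero every differential of the second kind admits a formal primitive at each point, and the resulting adele automatically lies in $\Sigma_X^\perp$ (apply the residue theorem to $g\omega$). So for $X$ of genus $g\geq 1$ and $\omega$ a nonzero holomorphic differential, the adele $\alpha=(\alpha_x)_x$ with $\alpha_x\in A_x$ a formal primitive of $\omega_x$ lies in $\Sigma_X^\perp$; yet $\alpha=f+c$ with $f\in\Sigma_X$ and $c\in\prod_x\k(x)$ would force $\d f=\omega$, which is impossible since $\d f$ of a nonconstant rational function always has a pole of order at least $2$. No ``Serre-duality-type vanishing'' can rescue this: $H^1_{\mathrm{dR}}(X)$ is computed precisely by second-kind differentials modulo exact ones and has dimension $2g$, so the reverse inclusion only closes on $\Ps^1$.

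You should also know that the paper's own proof stumbles at exactly this point: it simply writes ``let us consider a non-empty open subset $U\subset X$ and a rational function $f\in\Sigma_X$ such that $\d f=\omega$'' with no justification, which is the assertion your proposal correctly declines to make. A secondary caveat: your plan to ``reabsorb the $p$-th power contributions to $\ker(\d)$'' in positive characteristic does not work as stated either, since $\ker(\d)$ at each place is $\k(x)(\!(z_x^p)\!)$ and the product of these over all $x$ is far larger than $\Sigma_X+\prod_x\k(x)$. In short, your proposal is an accurate reconstruction of the intended argument, but the obstacle you flag is not a technicality you failed to resolve --- it is a real hole in the proof as given, and the equality requires either $X=\Ps^1$ or a correction to the statement (e.g.\ enlarging the right-hand side by the adelic primitives of representatives of $H^1_{\mathrm{dR}}(X)$).
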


\begin{proof}
	The Theorem~\ref{t:thm-res} shows that the r.h.s. in contained into the l.h.s.~. Now, let $\alpha $ belong to the l.h.s.~. Then, the linear map 
		\[
		\begin{aligned}
		\bar{\alpha}: \A_X &  \longrightarrow \, \k\\
		\beta &\longmapsto  \c_{Lie}( \alpha, \beta)  =\sum_{x\in X}  \tr_{\k(x)/\k} \res_x \alpha \d\beta
		\end{aligned}
		\]
	factorizes as a map $\A_X/\Sigma_X\to \k$. It is also straightforward that it factorizes by a map
		\[
		\A_X/(\Sigma_X+U_D) \,\longrightarrow\,  \k
		\]
	where $D$ is an effective divisor on $X$ such that $\sum_{x\in X} v_x(\alpha) x + D\geq 0$ and 
		\[
		U_D:=\{ \beta \,\text{ s.t. } \sum_{x\in X} v_x(\beta) x \geq D \}\, .
		\]
		
	Let us give a geometrical interpretation of $(\A_X/(\Sigma_X+U_D))^*$. Consider the exact sequence of $\O_X$-modules
		\[
		0\to \O_X \to \Sigma_X \to \Sigma_X/\O_X \to 0
		\]
	and tensor it with $\O_X(-D)$. The  associated long exact sequence of cohomology reads
		\[
		{\small \xymatrix@C=12pt{
		0 \ar[r] &  H^0(X,\O_X(-D)) \ar[r] \ar@{=}[d] &  \Sigma_X  \ar[r] \ar@{=}[d] &  
		H^0(X,\Sigma_X/\O_X(-D))  \ar[r] \ar@{=}[d]  &  
		H^1(X,\O_X(-D))\ar[r] \ar@{=}[d]  &  0
		\\
		0 \ar[r] & U_D \cap \Sigma_X \ar[r] &  \Sigma_X \ar[r] &   \A_X/U_D  \ar[r] &   \A_X/(U_D+\Sigma_X) \ar[r] &0  }}\]
Due to the hypotheses on $\k$ and on $X$, it holds that the dualizing sheaf is given by $\omega_X$, the canonical sheaf.  
Thus, the linear map $\bar{\alpha}$ is given by a meromorphic differential $\omega\in H^0(X,\omega_X(D)) \simeq  H^1(X,\O_X(-D))$ by the relation $\bar{\alpha}(\beta)= - \sum_{x\in X}\res_x \beta \omega$. That is, $\d\alpha=\omega$. 
	
	Now, let us consider a non-empty open subset $U\subset X$ and a rational function $f\in \Sigma_X$ such that $\d f =\omega$. Since $f$ belongs to the l.h.s., it follows that $\alpha-f $ also belongs to the l.h.s. or, what is tantamount, we may assume that there is a non-empty open subset $U$ such that $d(\alpha-f)\vert_U=0$. Since $\prod_{x\in X} \k(x)$ lies also in the l.h.s. we may assume without lost of generality that  $\alpha\vert_U=0$. Accordingly, for such $\alpha$ one has
		\[
		\sum_{x\in X\setminus U} \tr_{\k(x)/\k}  \res_x \alpha \d g \,=\, 0 \qquad \forall g\in \Sigma_X
		\]
But this condition implies that $\alpha_x\in\k(x)$ for all $x\in X\setminus U$ and the conclusion follows. 
\end{proof}

\begin{rem}
A multiplicative analog of definition \ref{e:SigmaPerp} is introduced in terms of the pairing \ref{e:pairing-I} as follows
	\[
	\Sigma_X':= \Big\{ \alpha\in \I_X^0 \,\text{ s.t. } 
	\langle \alpha , f \rangle =1 \quad\forall f\in\Sigma_X^*
	\Big\}\, .
	\]	
By Corollary \ref{c:WRL}, it makes sense to consider the quotient $\Sigma_X'/\Sigma_X^*$. Hence, one wonders whether Theorem \ref{t:algebraicity} has a multiplicative version from which it could be deduced. Indeed, such a study is the goal of \cite{MNPP}.
\end{rem}
%
%


\subsection{Gelfand-Fuchs cocycle}\label{ss:remarks}

In the last decades have been a renewed interest in these results due to their connections with mathematical physics and the geometric Langlands Program. Let us say a couple of words about this issue. 

Let us fix be a natural number  $n$ and a subgroup $G\subseteq \Gl(n,\k)$. Let $\g:=\Lie G$. Since $\g$ can be identified with $G(\operatorname{Spec}\k[\epsilon]/\epsilon^2)\times_{G(\operatorname{Spec} \k)}\{1\}$, which is commutative w.r.t. the group law inherited from $G$, one can apply most results of the paper. 

On the one hand, we may consider the \textit{local case}. That is, we apply the results of \S\ref{subsec:extensions}
 to the case $(V=K_x^{\oplus{n}}, V^+ = (A_x^+)^{\oplus{n}})$. Let us consider the \emph{loop group} 
	\[
	L_x\g \,:=\, \g \otimes_{\k} K_x \,\subseteq \, \gl(n, K_x)\, . 
	\]
The extension of this Lie algebra induced by \eqref{eq:TildeGl} is precisely the Kac-Moody algebra. Further, arguing as in the proof of Theorem~\ref{t:Res-adele}, one has
	\begin{equation}\label{e:GF}
	\begin{aligned}
	L_x\g \times 	L_x\g & \longrightarrow \k \\
	(S\otimes \alpha_x, T \otimes \beta_x) & \longmapsto 
	 \tr_{\k(x)/\k} \res_x  \tr(S T) \alpha_x \d \beta_x
	\end{aligned}
	\end{equation}
where $\tr$ denotes the trace map of $ \gl(n, K_x)$.  It is worth noticing that it coincides with the well-known Gelfand-Fuchs cocycle. See \cite[\S1.3.5, \S3.15]{Frenkel} for the relation with loop algebras. 

On the other hand,  the study of the \textit{global case} will yield a generalization of the Theorem of Residues. Let $V=\g\otimes_{\k}  \A_X $ and $V^+=\g\otimes_{\k} \A_X^+$.  The commutator  endows $\g\otimes_{\k} \A_X$ with a skew-symmetric bilinear map as in \eqref{e:pairing-A} which is expressed in terms of \eqref{e:GF}
\[
\begin{aligned}
\g\otimes_{\k}  \A_X  \times \g\otimes_{\k}  \A_X   & \longrightarrow \k \\
(S\otimes \alpha, T\otimes \beta) & \longmapsto 
	\sum_{x\in X}  \tr_{\k(x)/\k} \res_x \tr(S T) \alpha \d\beta
	\, . \end{aligned}
\]

	
%
%

\begin{thm}
Define $L_X\g:=   \g \otimes_{\k} \Sigma_X  \subset   \gl(n, \Sigma_X)$. It holds that
	\[
	\sum_{x\in X}  \tr_{\k(x)/\k} \res_x  \tr(S T) f \d g \, =\, 0
	\]
for $ S\otimes f, T \otimes g \in L_X\g$.
\end{thm}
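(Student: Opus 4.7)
My plan is to mimic the passage from Theorem~\ref{t:WRL} to Theorem~\ref{t:thm-res}, now applied to the pair $(V, V^+) := (\g \otimes_\k \A_X, \g \otimes_\k \A_X^+)$ in place of $(\A_X, \A_X^+)$. Since $\g$ is a finite-dimensional $\k$-vector space, this new pair satisfies the hypotheses of \S\ref{subsec:extensions}, and the subspace $W := \g \otimes_\k \Sigma_X$ is a point of $\gr(V)$ (this follows at once from the fact that $\Sigma_X$ is a point of $\gr(\A_X)$, tensored with $\g$). The group $\Gl(n, \Sigma_X)$, embedded in $\Gl(V)$ by matrix multiplication on $V \simeq \A_X^{\oplus n}$, manifestly stabilizes $W$. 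Exactly as in Theorem~\ref{t:WRL}, this implies that the central extension of $\Gl(n, \Sigma_X)$ pulled back from \eqref{eq:centralext-global} is trivial.

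Applying the functor-on-groups formalism of \S\ref{ss:Lie} at $R = \k[\epsilon]/(\epsilon^2)$, the corresponding central extension of Lie algebras of $L_X\g = \g \otimes_\k \Sigma_X$ by $\k$ is trivial as well, i.e.\ its $2$-cocycle $\c_{Lie}$ vanishes identically on $L_X\g \times L_X\g$. It remains to identify this $\c_{Lie}$ with the global Gelfand-Fuchs-type pairing displayed just before the theorem, namely $\c_{Lie}(S \otimes f, T \otimes g) = \sum_{x\in X} \tr_{\k(x)/\k} \res_x \tr(ST) f \d g$.

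I would do this by combining the adelic decomposition of Theorem~\ref{t:wx=1} (adapted to the new pair) with Corollary~\ref{c:cLie=tr}: locally at each $x$, the latter produces a trace of the form $\tr(\gamma_2 \beta_1 - \gamma_1 \beta_2)$ for block-matrix operators on $(A_x)^{\oplus n}$, and when these operators are the multiplications by $S \otimes \alpha_x$ and $T \otimes \beta_x$, a direct computation shows that this trace factorises as $\tr(ST) \cdot \res_x \alpha_x \d \beta_x$. This factorisation is the main bookkeeping step — essentially separating the matrix trace on $\gl(n, \k)$ from the residue on $K_x$ — after which the theorem follows immediately from the triviality of $\c_{Lie}$ on $L_X\g$.

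I note in closing that a much shorter alternative proof is available: since $\tr(ST) \in \k$ for $S, T \in \g$, one can factor it out of the sum, reducing the identity to $\tr(ST) \cdot \sum_{x\in X} \tr_{\k(x)/\k} \res_x f \d g = 0$, which is immediate from Theorem~\ref{t:thm-res}. The longer derivation above is nevertheless the one aligned with the overall philosophy of the paper, in which such identities are deduced from the triviality of natural central extensions rather than by a direct reduction to the scalar case.
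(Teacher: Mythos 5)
Your proposal follows the paper's proof essentially verbatim: the paper likewise observes that $\g\otimes_{\k}\Sigma_X$ is a point of the grassmannian of $\g\otimes_{\k}\A_X$ fixed by a group whose Lie algebra contains $L_X\g$ (the paper uses $G\times\Sigma_X^*$ acting by the adjoint action times multiplication, where you use $\Gl(n,\Sigma_X)$ --- note that matrix multiplication preserves $\A_X^{\oplus n}$ but not $\g\otimes_{\k}\A_X$, so you should commit to one of the two models of $(V,V^+)$ rather than mixing them), deduces triviality of the induced central extension and hence of the Lie-algebra cocycle, and identifies that cocycle with $\sum_{x}\tr_{\k(x)/\k}\res_x\tr(ST)f\d g$ by the local computations of Theorem~\ref{t:Res-adele} and Corollary~\ref{c:cLie=tr}. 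Your closing observation is also correct: since $\tr(ST)\in\k$ is a constant that can be pulled out of each residue, the identity is an immediate consequence of Theorem~\ref{t:thm-res}, a legitimate (and much shorter) alternative that the paper does not mention.
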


\begin{proof}
	One checks that $\g \otimes_{\k} \Sigma_X$ is a point of the infinite grassmannian of $\g\otimes_{\k}  \A_X $. Since this point is invariant under the action of $G \times \Sigma_X^*$, where $G$ acts on $\g$ via the adjoint action, it follows that the central extension defined by the determinant becomes trivial when restricted to $G \otimes_{\k} \Sigma_X^*$ (see \S\ref{subsec:extensions}). Therefore, the corresponding extension at the level of Lie algebras, as in \S\ref{ss:Lie}, is trivial too. 
	
	Expressing the corresponding cocycle in terms of the cocycles of the local cases (as it was done in Theorem~\ref{t:Res-adele}), the conclusion follows. 
\end{proof}

%



\end{document}